\newtheorem{theorem}{Theorem}[section]
\newtheorem{proposition}[theorem]{Proposition}
\newtheorem{lemma}[theorem]{Lemma}
\newtheorem{corollary}[theorem]{Corollary}
\newtheorem{observation}[theorem]{Observation}
\newtheorem*{main1}{Main Result 1}
\newtheorem*{main2}{Main Result 2}
\newtheorem*{plu}{Pl\"{u}cker Formula}
\theoremstyle{definition}
\theoremstyle{remark}
\newtheorem{remark}[theorem]{Remark}
\newenvironment{proof1}{{\noindent\it Proof of Main Result 1.}\quad}{\hfill$ \square$ \par}
\newenvironment{proof2}{{\noindent\it Proof of Main Result 2.}\quad}{\hfill$ \square$ \par}
\numberwithin{equation}{section}
\begin{document}
\bibliographystyle{plain}
\title[Numerical Ranges of Composition Operators]{Numerical Ranges of Composition Operators with Elliptic Automorphism Symbols}

\author[Y.X. Gao, Y. Liang, Y. Wang and Z.H. Zhou] {Yong-Xin Gao, Yuxia Liang, Ya Wang and Ze-Hua Zhou$^*$}
\address{\newline  Yong-Xin Gao\newline School of Mathematical Sciences and LPMC, Nankai University, Tianjin 300071, P.R. China.}
\email{tqlgao@163.com}

\address{\newline Yuxia Liang\newline
School of Mathematical Sciences, Tianjin Normal University, Tianjin 300387, P.R. China}
\email{liangyx1986@126.com}

\address{\newline Ya Wang\newline
 Department of Mathematics, Tianjin university of finance and economics, Tianjin 300222, P.R. China.}
\email{wangyasjxsy0802@163.com}

\address{\newline Ze-Hua Zhou\newline School of Mathematics, Tianjin University, Tianjin 300354,
P.R. China.}
\email{zehuazhoumath@aliyun.com; zhzhou@tju.edu.cn}

\keywords{Composition operator; Numerical range; Elliptic automorphism}
\subjclass[2010]{primary 47B33, 46E20; Secondary 47B38, 46L40, 32H02}

\date{}
\thanks{\noindent $^*$Corresponding author.\\
This work was supported in part by the National Natural Science
Foundation of China (Grant Nos.12001293, 11701422, 12171353, 11771323).}

\begin{abstract}
In this paper we investigate the numerical ranges of composition operators whose symbols are elliptic automorphisms of finite orders on the Hilbert Hardy space $H^2(D)$.
\end{abstract}

\maketitle

\section{Introduction}

Let $T$ be an operator on a complex Hilbert space $\mathscr{H}$. The numerical range of $T$, denoted by $W(T)$ in this paper, is the image of the unit sphere of
$\mathscr{H}$ under the quadratic form associated with $T$. That is,
$$W(T)=\{\langle Tf, f\rangle : f\in\mathscr{H}, ||f||=1\}.$$
It is a bounded subset of $\mathbb{C}$. And the spectrum of $T$ is contained in the closure of $W(T)$.

In this paper, we discuss the numerical ranges of composition operators on Hardy space. Let $D$ be the open unit disk in $\mathbb{C}$. Recall that the Hardy space $H^2(D)$ consists of the holomorphic functions on $D$ that are square-integrable on unit circle. It is a Hilbert space with the following inner product:
$$\langle f,g\rangle=\lim_{r\to 1^-}\int_{0}^{2\pi}f(re^{i\theta})\overline{g(re^{i\theta})}\frac{d\theta}{2\pi}.$$
It is well known that each holomorphic self-map $\varphi$ of $D$ induces a bounded operator $C_\varphi$ on $H^2(D)$. This operator is defined as $C_\varphi f=f\comp\varphi,$ and is called a composition operator.

The numerical ranges of composition operators have been studied by many experts in the past twenty years. Yet, known results are far from ample, partly because of the difficulties caused by the fact that the numerical range is not invariant under similarity. This makes the study of numerical ranges of composition operators an interesting and challenging topic.

The early attempts focused on the composition operators induced by some concrete self-maps. In 2001, Matache \cite{Mat} figured out the numerical ranges of composition operators induced by monomials. And Shapiro \cite{Inner} gives some information about the numerical ranges of composition operators induced by inner functions.

In 2000, Bourdon and Shapiro considered the numerical ranges of invertible composition operators in their paper \cite{BS}. Recall that a composition operator $C_\varphi$ is invertible on $H^2(D)$ if and only if $\varphi$ is an automorphism of $D$.

If $\varphi$ is a hyperbolic or parabolic automorphism,  Bourdon and Shapiro proof that $W(C_\varphi)$ is a disc centered at the origin. But the radius of the disc and whether the disc is open or closed are unknown, except for a special case.

If $\varphi$ is a elliptic automorphism, then the shape of $W(C_\varphi)$ is closely related to the order of $\varphi$. When the order of $\varphi$ is infinite, Bourdon and Shapiro showed that the closure of $W(C_\varphi)$ is a disc centered at the origin. However, they still did not find out the radius of this disc, and one does not know which points of the boundary, if any, belong to $W(C_\varphi)$.

When $\varphi$ is a elliptic automorphism of finite order, the shape of $W(C_\varphi)$ can become more complicated:

$\bullet$ If $\varphi$ is of order $2$, Bourdon and Shapiro prove that the closure of $W(C_\varphi)$ is an ellipse with foci $\pm 1$. Later in 2005, Abdollahi \cite{Abd05} gave the length of major axis of this ellipse. However, neither \cite{BS} nor \cite{Abd05} gives any information about what points on the boundary of this ellipse belong to $W(C_\varphi)$.

$\bullet$ If $\varphi$ is of finite order greater than $2$, little is known about $W(C_\varphi)$. Bourdon and Shapiro said they `strongly suspect that in this case the closure of $W(C_\varphi)$ is not a disc'. In 2013, Patton proved that Bourdon and Shapiro's conjecture is true when $\varphi$ of order $3$, see \cite{Patton1}. Then in 2015, Heydari and Abdollahi \cite{Abd15} showed that Bourdon and Shapiro's conjecture is true at least `for a large class of finite order elliptic automorphisms'. Recently, Patton and her students verified Bourdon and Shapiro's conjecture when $\varphi$ of order $4$ in their paper \cite{Patton2}.

In this paper we will continue to investigate the numerical range of $C_\varphi$ on $H^2(D)$ when $\varphi$ is an elliptic automorphism of finite order. In Section 4, we will prove that $W(C_\varphi)$ is an open set when $\varphi$ is of order $2$. This result completes the discussion in \cite{BS} and \cite{Abd05}, and is one of our main results in this paper.

\begin{main1}
Suppose $\varphi$ is an elliptic automorphism of order $2$ and the fixed point of  $\varphi$ is $a\in D\backslash\{0\}$. Then the numerical range of $C_\varphi$ on $H^2(D)$ is the open ellipse with foci $\pm 1$ and semi-major axis $\frac{1+|a|^2}{1-|a|^2}$.
\end{main1}

In Section 3 we focus on the case where $\varphi$ is of order $3$. Patton \cite{Patton1} found the support function of $W(C_\varphi)$ for this case. Now in this paper, we will give a complete description of the numerical ranges of composition operators induced by elliptic automorphisms of order 3. It is our second main result here.
\begin{main2}
Suppose $\varphi$ is an elliptic automorphism of order $3$ and the fixed point of  $\varphi$ is $a\in D\backslash\{0\}$. Then the numerical range of $C_\varphi$ on $H^2(D)$ is the interior of the convex hull of an algebraic curve of class $3$ and degree $6$. Moreover, the real foci of this curve are $1$, $e^{2\pi i/3}$ and $e^{4\pi i/3}$. The equation of this curve is given in \text{Section 3} as (\ref{curve*}).
\end{main2}

In particular, the closure of $W(C_\varphi)$ is obviously not a disc when $\varphi$ is of order $3$.

\section{Preliminaries}

\subsection{Support Lines of a Convex Set}
The famous Toeplitz-Hausdorff Theorem, see \cite{Hilbert} for example, states that the numerical range of an operator is always a bounded convex set in $\mathbb{C}$.

Suppose $E$ is a bounded convex set in $\mathbb{C}$. For $\alpha\in\mathbb{R}$, define the line $\mathscr{L}_E(\alpha)$ as follows,
$$\mathscr{L}_E(\alpha) : \cos\alpha\cdot x+\sin\alpha\cdot y-\Lambda_E(\alpha)=0,$$
where
$$\Lambda_E(\alpha)=\sup_{w\in E}{\mathrm{Re}(e^{-i\alpha}w)}.$$
The line $\mathscr{L}_E(\alpha)$ is called the support line of $E$ perpendicular to $e^{i\alpha}$. Obviously, $\mathscr{L}_E(\alpha)$ goes through the point $\Lambda_E(\alpha)e^{i\alpha}$, and it does not separate any two points in $E$. This means that for each $\alpha\in\mathbb{R}$, the set $E$ is contained in one of the halfplanes whose boundaries are $\mathscr{L}_E(\alpha)$. By the Hahn-Banach Theorem, the intersection of these halfplanes is exactly the closure of $E$. So the closure of a convex set is uniquely decided by its support lines.

In our Section 3, in order to determine the convex set $W(C_\varphi)$, we will find out all the support lines of $W(C_\varphi)$. This idea is succeeded from Bourdon and Shapiro's approach in \cite{BS}.

\subsection{Algebraic Curves on Projective Plane}

In this paper we need some basic concepts about the geometry on projective plane. Recall that the projective plane extends the Euclidean plane by adding the `points at infinity'. Homogeneous coordinates $(x,y,z)$ are used to denote the points on projective plane.

Duality between lines and points is a fundamental principle of projective geometry. The equations of lines in homogeneous coordinate are of the form $u_0x+v_0y+w_0z=0$, which are uniquely determined by the homogeneous coordinate of the point $(u_0,v_0,w_0)$ on the dual plane. Conversely, a given point $(x_0,y_0,z_0)$ on the projective plane determines a line $x_0u+y_0v+z_0w=0$ on dual plane.

Suppose $f(x,y,z)=0$ is an algebraic curve on projective plane, then its tangents form a curve in the dual plane, which is called the dual curve of $f(x,y,z)=0$. The equation of the dual curve is called the tangential equation for the original curve. The classic Pl\"{u}cker Formula gives the relation between the degrees of a curve and its dual curve.
\begin{plu}
Suppose $f(x,y,z)=0$ is an algebraic curve of degree $d$, with $\tau$ nodes and $\kappa$ cusps. Then the degree of its dual curve, denoted by $d^*$, satisfies the following equation:
$$d^*=d(d-1)-2\tau-3\kappa.$$
\end{plu}
The degree of the dual curve is called the class of the original curve.

\subsection{Numerical Ranges of Composition Operators}

The space being considered in this paper is the Hardy space $H^2(D)$. It is a Reproducing Kernel Hilbert Space, and the reproducing kernel at point $w\in D$ is
$$K_w(z)=\frac{1}{1-\overline{w}z}.$$
Throughout this paper, we will use the notation $k_w$ to denote the normalized reproducing kernel at point $w\in D$, that is,
$$k_w(z)=K_w(z)/||K_w||=\frac{\sqrt{1-|w|^2}}{1-\overline{w}z}.$$

Several known results about the numerical ranges of composition operators on $H^2(D)$ will be useful in our following discussions. The first result is Corollary 3.4 in \cite{Abd15}.

\begin{theorem}\label{dc}
Suppose $\varphi$ is an elliptic automorphism of order $p$. Let $W(C_\varphi)$ be the numerical range of $C_\varphi$ on $H^2(D)$. Then $\omega\in W(C_\varphi)$ if and only if $e^{2\pi i/p}\omega\in W(C_\varphi)$.
\end{theorem}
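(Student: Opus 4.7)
The plan is to reduce the claim to producing, for each unit vector $f \in H^2(D)$, a new unit vector $g$ with $\langle C_\varphi g, g\rangle = \lambda \langle C_\varphi f, f\rangle$, where $\lambda := e^{2\pi i/p}$. Such a construction immediately gives $\omega \in W(C_\varphi) \Rightarrow \lambda \omega \in W(C_\varphi)$; iterating $p-1$ times and invoking $\lambda^p = 1$ then yields the reverse implication as well.

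My first step would be to pass to a more tractable unitarily equivalent form of $C_\varphi$. Since $\varphi$ is elliptic with fixed point $a$, one has $\varphi = \alpha_a \circ R_\lambda \circ \alpha_a$, where $\alpha_a(z) = (a-z)/(1-\bar a z)$ is the Möbius involution and $R_\lambda(z) = \lambda z$. I would conjugate $C_\varphi$ by the involutive weighted-composition unitary
\[
(U_a f)(z) = \frac{\sqrt{1-|a|^2}}{1-\bar a z}\, f(\alpha_a(z)),
\]
and use the identity $1 - \bar a\, \alpha_a(w) = (1-|a|^2)/(1 - \bar a w)$ together with the relation $\alpha_a \circ \varphi = R_\lambda \circ \alpha_a$ to compute that
\[
T f(z) := (U_a C_\varphi U_a f)(z) = \frac{1-\bar a\lambda z}{1-\bar a z}\, f(\lambda z).
\]
Since $U_a$ is unitary (in fact a self-adjoint involution), $W(C_\varphi) = W(T)$, so it suffices to establish the rotational symmetry for $T$.

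The key observation is the commutation $T M_z = \lambda M_z T$, where $M_z$ denotes multiplication by the coordinate function on $H^2(D)$. Writing $\mu_a(z) = (1-\bar a \lambda z)/(1-\bar a z)$, this follows immediately from the form of $T$: $T(zf)(z) = \mu_a(z) \cdot \lambda z \cdot f(\lambda z) = \lambda z \cdot Tf(z)$. Because $M_z$ is an isometry of $H^2(D)$ (i.e., $M_z^* M_z = I$), for any unit vector $f$ the vector $g := M_z f$ is again a unit vector and
\[
\langle Tg, g \rangle = \langle M_z^* T M_z f, f\rangle = \lambda \langle T f, f \rangle,
\]
which is exactly the required rotational relation.

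I do not anticipate a serious obstacle: the computation of $T$ in the second paragraph is routine, and the commutation identity essentially reflects the fact that the "composition with $R_\lambda$" part of $T$ produces a factor of $\lambda$ when it encounters the extra $z$ contributed by $M_z$, while the accompanying multiplier $\mu_a$ is simply carried along. The only subtle point is that $M_z$ is not surjective on $H^2(D)$, so the reverse implication $\lambda\omega \in W(C_\varphi) \Rightarrow \omega \in W(C_\varphi)$ cannot be obtained by literally inverting $M_z$; instead, one iterates the forward construction $p-1$ times starting from $\lambda\omega$ and invokes $\lambda^p = 1$.
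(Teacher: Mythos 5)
Your argument is correct. Note first that the paper offers no proof of this statement at all: Theorem \ref{dc} is imported verbatim as Corollary 3.4 of \cite{Abd15}, so there is no in-paper argument to compare against; what you have written is a self-contained proof of the cited result. The computation of $T=U_aC_\varphi U_a$ checks out (using $\varphi_a\circ\varphi=R_\lambda\circ\varphi_a$ and $1-\bar a\varphi_a(w)=(1-|a|^2)/(1-\bar aw)$), the intertwining $TM_z=\lambda M_zT$ is right, and combining it with $M_z^*M_z=I$ gives the forward implication; your handling of the non-surjectivity of $M_z$ by iterating $p-1$ times is exactly the right fix for the converse. Two small remarks. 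First, the decomposition $\varphi=\varphi_a\circ R_\lambda\circ\varphi_a$ forces $\lambda=\varphi'(a)$, which is a \emph{primitive} $p$-th root of unity but need not equal $e^{2\pi i/p}$; this costs nothing, since invariance of $W(C_\varphi)$ under multiplication by a primitive $p$-th root of unity gives invariance under the full cyclic group it generates, hence under $e^{2\pi i/p}$ in particular --- but the sentence ``$\lambda:=e^{2\pi i/p}$'' should be adjusted. Second, the conjugation by $U_a$ can be dispensed with entirely: transporting your relation back gives $C_\varphi M_{\varphi_a}=\varphi'(a)\,M_{\varphi_a}C_\varphi$ directly (since $\varphi_a\circ\varphi=\varphi'(a)\varphi_a$), and $M_{\varphi_a}$ is already an isometry of $H^2(D)$ because $\varphi_a$ is inner, so
$$\langle C_\varphi(\varphi_a f),\varphi_a f\rangle=\varphi'(a)\,\langle C_\varphi f,f\rangle$$
for every unit vector $f$. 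This one-line version is essentially the mechanism behind the quoted result in \cite{Abd15}, and it also meshes with the Guyker basis $e_j=k_a\varphi_a^j$ used throughout Section 3, where $M_{\varphi_a}$ acts as the unilateral shift.
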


The following result is Theorem 7 in \cite{Abd05}, which is the main result there.

\begin{theorem}\label{zc}
Suppose $\varphi$ is an elliptic automorphism of order $2$ and the fixed point of $\varphi$ is $a\in D\backslash\{0\}$. Let $W(C_\varphi)$ be the numerical range of $C_\varphi$ on $H^2(D)$. Then $\overline{W(C_\varphi)}$ is the ellipse with foci $\pm 1$ and semi-major axis $\frac{1+|a|^2}{1-|a|^2}$.
\end{theorem}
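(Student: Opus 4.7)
The plan is to exploit the fact that an order-two elliptic automorphism satisfies $\varphi\circ\varphi=\mathrm{id}_D$, so $T:=C_\varphi$ is a bounded involution on $H^2(D)$ with $T\neq\pm I$ (since $\varphi$ is neither constant nor the identity). The proof then reduces to an abstract shape statement plus a concrete norm computation.

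The shape statement I would use is the following general fact: if $T$ is a bounded Hilbert-space involution with $T\neq\pm I$, then $\overline{W(T)}$ is the filled ellipse with foci $\pm 1$ whose semi-axes are $p=\tfrac{1}{2}(\|T\|+\|T\|^{-1})$ and $q=\tfrac{1}{2}(\|T\|-\|T\|^{-1})$; in particular $p^2-q^2=1$ (foci at $\pm 1$) and $p+q=\|T\|$. To prove it, decompose $H^2(D)=V_+\dotplus V_-$ into the $\pm 1$ eigenspaces of $T$, write a unit vector $f=f_++f_-$, and compute
\[
\langle Tf,f\rangle=\|f_+\|^2-\|f_-\|^2+2i\,\mathrm{Im}\,\langle f_+,f_-\rangle,\qquad \|f_+\|^2+\|f_-\|^2+2\,\mathrm{Re}\,\langle f_+,f_-\rangle=1.
\]
Parametrising by $u=\|f_+\|$, $v=\|f_-\|$, and $\langle f_+,f_-\rangle$ (whose modulus is bounded by $c\cdot uv$, where $c$ is the cosine of the minimal angle between $V_\pm$), the resulting set is a two-parameter family of ellipses whose envelope is precisely the advertised one; equivalently, one reduces via polar decomposition to the canonical $2\times 2$ model $\bigl(\begin{smallmatrix}1&t\\0&-1\end{smallmatrix}\bigr)$, where the computation is transparent and the general relation $\|T\|=\sqrt{(1+c)/(1-c)}$ can be read off.

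It remains to compute $\|T\|=\|C_\varphi\|$. Realising $\varphi$ as $\psi_a\circ(-\mathrm{id})\circ\psi_a$, where $\psi_a(z)=(a-z)/(1-\bar a z)$, gives
\[
\varphi(z)=\frac{2a-(1+|a|^2)z}{1+|a|^2-2\bar a z},\qquad |\varphi(0)|=\frac{2|a|}{1+|a|^2}.
\]
Nordgren's formula $\|C_\psi\|^2=(1+|\psi(0)|)/(1-|\psi(0)|)$ for disc automorphisms then yields $\|C_\varphi\|=(1+|a|)/(1-|a|)$, and substituting into the formula for $p$ gives
\[
p=\tfrac{1}{2}\Bigl(\tfrac{1+|a|}{1-|a|}+\tfrac{1-|a|}{1+|a|}\Bigr)=\frac{1+|a|^2}{1-|a|^2},
\]
the advertised semi-major axis. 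The main obstacle is the general elliptic-shape statement for involutions; once that is in hand, the remaining steps are straightforward bookkeeping.
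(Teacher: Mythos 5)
The paper never proves Theorem \ref{zc}: it is imported verbatim as Theorem 7 of \cite{Abd05} (the foci--and--ellipse shape going back to \cite{BS}), so there is no in-paper proof to compare yours against. That said, your route is sound and essentially self-contained. The abstract claim about involutions is correct once you replace the loose word ``envelope'' by the observation that $\mathrm{span}\{f_+,f_-\}$ is $T$-invariant, so every value $\langle Tf,f\rangle$ with $\|f\|=1$ already lies in the numerical range of a genuine $2\times 2$ involution $\bigl(\begin{smallmatrix}1&t\\0&-1\end{smallmatrix}\bigr)$ with $|t|=2c'/\sqrt{1-c'^2}$, where $c'=|\langle f_+,f_-\rangle|/(\|f_+\|\,\|f_-\|)$; these filled ellipses are confocal with foci $\pm1$ and nested in $c'$, so $\overline{W(T)}$ is the closed ellipse corresponding to $c=\sup c'$, and the relation $\|T\|=\sqrt{(1+c)/(1-c)}$ follows from the same slicing. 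Your norm computation is also right: $\varphi(0)=2a/(1+|a|^2)$ and Nordgren's formula for inner symbols give $\|C_\varphi\|=(1+|a|)/(1-|a|)$, hence $p=(1+|a|^2)/(1-|a|^2)$. It is worth pointing out that the quantity $c=2|a|/(1+|a|^2)$ your argument hinges on is exactly what the paper's Lemma \ref{lll} establishes, as a \emph{strict} upper bound on the cosine of the angle between the two eigenspaces, and the paper's proof of Main Result 1 in Section 4 is essentially the ``upper bound'' half of your argument carried out by hand. Since that supremum is not attained, your approach would in fact deliver Theorem \ref{zc} and the openness assertion of Main Result 1 in one stroke.
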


Recall that any elliptic automorphism of order $2$ is of the form
$$\varphi_a(z)=\frac{a-z}{1-\overline{a}z}.$$
It is an involution automorphism that exchanges $0$ and $a$. The notation $\varphi_a$ will be used throughout this paper.

\section{Elliptic Automorphisms of Order $3$}

Firstly, we consider the case where $\varphi$ is an elliptic automorphism of order $3$. Instead of computing the numerical range of $C_\varphi$ directly, we deal with its adjoint operator $C_\varphi^*$. From now on, we will always assume that the fixed point of $C_\varphi$ is not zero. Otherwise, $C_\varphi$ will be a monomial, and the corresponding result is given in \cite{Mat}.

\subsection{Eigenvectors of $C_\varphi^*$}

If $\varphi$ is an automorphism of order $3$ with fixed point $a\in D$, then on $H^2(D)$ the operator $C_\varphi^*$ has three eigenvalues: $1$, $\overline{\varphi'(a)}$, and $\overline{\varphi'(a)}^2$. The next lemma gives the eigenvectors corresponding to each eigenvalue. It is a direct corollary of Lemma 2.2 in \cite{BN}, or one can find it as Corollary 2.6 in \cite{MY}.

\begin{lemma}\label{tzz}
Suppose $\varphi$ is an elliptic automorphism of order $3$ with fixed point $a\in D\backslash\{0\}$. Then for $k=0,1,2$, $$\mathrm{Ker}(C_\varphi^*-\overline{\varphi'(a)}^k)=\overline{\mathrm{span}}\{e_{3j+k}-ae_{3j+k-1}; j=0,1,2,...\},$$
where $e_{-1}=0$ and $e_j=k_a\varphi_a^j$ for $j=0,1,2,...$.
\end{lemma}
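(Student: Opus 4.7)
The plan is to transport the problem to the standard orthonormal basis $\{z^j\}$ via a unitary involution. Since $\varphi$ is an order-$3$ elliptic automorphism fixing $a\in D\setminus\{0\}$, the M\"obius conjugation $\varphi = \varphi_a \circ R_\omega \circ \varphi_a$ holds with $R_\omega(z) = \omega z$ and $\omega := \varphi'(a)$ a primitive cube root of unity. Introduce the weighted composition operator $W_a f := k_a \cdot (f\circ \varphi_a)$ on $H^2(D)$. The basic identity $1 - \bar a\,\varphi_a(z) = (1-|a|^2)/(1-\bar a z)$ yields $k_a(z)\cdot(k_a\circ\varphi_a)(z) \equiv 1$, which implies that $W_a$ is unitary with $W_a^2 = I$ (so in particular $W_a^* = W_a$). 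Since $W_a z^j = k_a \varphi_a^j = e_j$, the family $\{e_j\}$ is an orthonormal basis of $H^2(D)$, and it suffices to analyze the unitary conjugate $T := W_a C_\varphi W_a$ in the basis $\{z^j\}$.

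A short M\"obius composition using $\varphi\circ\varphi_a = \varphi_a \circ R_\omega$ reduces $T$ to a weighted composition operator,
\begin{equation*}
T f \;=\; \frac{1 - \bar a \omega z}{1 - \bar a z}\,(f \circ R_\omega), \qquad\text{equivalently}\qquad T z^j \;=\; \omega^j\,\frac{1-\bar a\omega z}{1-\bar a z}\,z^j.
\end{equation*}
Expanding $(1-\bar a z)^{-1}$ as a geometric series makes $T$ upper triangular in $\{z^j\}$; its adjoint $T^*$ is therefore lower triangular with explicit entries, and a short telescoping cancellation gives
\begin{equation*}
T^*\bigl(z^j - a\,z^{j-1}\bigr) \;=\; \bar\omega^j\bigl(z^j - a\,z^{j-1}\bigr),\qquad j \ge 0,\ z^{-1}:=0.
\end{equation*}
Applying $W_a$ to both sides and using $W_a C_\varphi^* W_a = T^*$ transports the identity to $C_\varphi^*(e_j - a e_{j-1}) = \overline{\varphi'(a)}^{\,j}(e_j - a e_{j-1})$, so every $e_{3j+k} - a e_{3j+k-1}$ lies in $\mathrm{Ker}\bigl(C_\varphi^* - \overline{\varphi'(a)}^{\,k}\bigr)$.

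To finish, I need to show these vectors span their eigenspace. Because $\varphi^{\circ 3} = \mathrm{id}_D$, the operator $C_\varphi^*$ satisfies $(C_\varphi^*)^3 = I$, so the bounded spectral projections $P_k := \tfrac{1}{3}\sum_{m=0}^{2}\omega^{km}(C_\varphi^*)^m$ decompose $H^2(D)$ as the internal direct sum of the three kernels $\mathrm{Ker}\bigl(C_\varphi^* - \overline{\varphi'(a)}^{\,k}\bigr)$ for $k=0,1,2$. Writing $v_n := e_n - a e_{n-1}$, the triangular recursion $e_j = v_j + a v_{j-1} + \cdots + a^j v_0$ shows every $e_j$ lies in $\mathrm{span}\{v_n\}$, and hence $\overline{\mathrm{span}}\{v_n : n\ge 0\} = H^2(D)$. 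Since each $v_n$ already sits in a single eigenspace determined by $n \bmod 3$, applying $P_k$ produces $\overline{\mathrm{span}}\{v_{3j+k} : j\ge 0\} = \mathrm{Ker}\bigl(C_\varphi^* - \overline{\varphi'(a)}^{\,k}\bigr)$. The main technical hurdle I anticipate is the opening M\"obius reduction yielding the clean formula for $T$; once that is in hand, the eigenvalue identity and the spanning argument are both routine.
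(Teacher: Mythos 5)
Your proof is correct, but note that the paper does not actually prove Lemma \ref{tzz}: it is quoted as a direct corollary of Lemma 2.2 in \cite{BN} (equivalently Corollary 2.6 in \cite{MY}), so there is no in-paper argument to compare against. Your route --- conjugating $C_\varphi$ by the self-adjoint unitary $W_af=k_a\cdot(f\circ\varphi_a)$ to the weighted composition operator $Tf=\frac{1-\overline{a}\omega z}{1-\overline{a}z}\,(f\circ R_\omega)$, reading off $T^*(z^j-az^{j-1})=\overline{\omega}^j(z^j-az^{j-1})$ from the triangular matrix by a telescoping cancellation, and then using the spectral projections $P_k=\frac{1}{3}\sum_{m=0}^{2}\omega^{km}(C_\varphi^*)^m$ together with the identity $e_j=\sum_{m=0}^{j}a^mv_{j-m}$ to upgrade ``these vectors are eigenvectors'' to ``their closed span is the whole eigenspace'' --- is essentially the standard derivation that underlies the cited results, and every step checks out: $k_a\cdot(k_a\circ\varphi_a)\equiv 1$ does give $W_a^2=I$, the weight $\frac{1-\overline{a}\omega z}{1-\overline{a}z}$ is correct since $k_a\circ\varphi_a\circ R_\omega=(1-\overline{a}\omega z)/\sqrt{1-|a|^2}$, the cancellation in $T^*(z^j-az^{j-1})$ works, and the completeness argument via $P_k$ is sound because the $P_k$ are bounded and fix or annihilate each $v_n$ according to $n\bmod 3$. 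Two cosmetic remarks only: whether $T$ is ``upper'' or ``lower'' triangular is a matter of indexing convention and does not affect the adjoint computation; and you should state explicitly that $\omega=\varphi'(a)$ is a \emph{primitive} cube root of unity (because $\varphi$ is conjugate to $R_\omega$ and has order exactly $3$), since that is what makes $\overline{\omega}^{3j+k}=\overline{\omega}^{k}$ and sorts the vectors $v_n$ into the three eigenspaces by residue class.
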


Note that $\{e_j\}_{j=0}^\infty$ is an orthonormal basis for $H^2(D)$. It is called the Guyker basis. So for each $f\in H^2(D)$, there is a unique decomposition $f=f_1+f_2+f_3$, where $f_k\in\mathrm{Ker}(C_\varphi^*-\overline{\varphi'(a)}^{k-1})$ for $k=1,2,3$.

Now suppose
$$f_1=||f_1||\cdot\left(\alpha_0e_0+\sum_{j=1}^{\infty}\alpha_j\frac{e_{3j}-ae_{3j-1}}{\sqrt{1+|a|^2}}\right)\in \mathrm{Ker}(C_\varphi^*-I);$$
$$f_2=||f_2||\cdot\left(\sum_{j=0}^{\infty}\beta_j\frac{e_{3j+1}-ae_{3j}}{\sqrt{1+|a|^2}}\right)\in\mathrm{Ker}(C_\varphi^*-\overline{\varphi'(a)});$$
$$f_3=||f_3||\cdot\left(\sum_{j=0}^{\infty}\gamma_j\frac{e_{3j+2}-ae_{3j+1}}{\sqrt{1+|a|^2}}\right)\in\mathrm{Ker}(C_\varphi^*-\overline{\varphi'(a)}^2).$$
Here we assume
$$\sum_{j=0}^{\infty}|\alpha_j|^2=\sum_{j=0}^{\infty}|\beta_j|^2=\sum_{j=0}^{\infty}|\gamma_j|^2=1.$$
Then
$$\langle f_2,f_3\rangle=||f_2||||f_3||\cdot\left(-\frac{\overline{a}}{1+|a|^2}\sum_{j=0}^{\infty}\beta_j\overline{\gamma_j}\right);$$
$$\langle f_3,f_1\rangle=||f_3||||f_1||\cdot\left(-\frac{\overline{a}}{1+|a|^2}\sum_{j=0}^{\infty}\gamma_j\overline{\alpha_{j+1}}\right);$$
$$\langle f_1,f_2\rangle=||f_1||||f_2||\cdot\left(-\frac{\overline{a}}{\sqrt{1+|a|^2}}\alpha_0\overline{\beta_0}- \frac{\overline{a}}{1+|a|^2}\sum_{j=1}^{\infty}\alpha_j\overline{\beta_j}\right).$$
From these equalities one can make the following observations.

\begin{observation}\label{ob1}
Suppose $f_k\in\mathrm{Ker}(C_\varphi^*-\overline{\varphi'(a)}^{k-1})\backslash\{0\}$ for $k=1,2,3$. Then
$$\frac{|\langle f_2,f_3\rangle|}{||f_2||||f_3||}\leqslant\frac{|a|}{1+|a|^2};$$
$$\frac{|\langle f_3,f_1\rangle|}{||f_3||||f_1||}\leqslant\frac{|a|}{1+|a|^2};$$
$$\frac{|\langle f_1,f_2\rangle|}{||f_1||||f_2||}\leqslant\frac{|a|}{\sqrt{1+|a|^2}}.$$
\end{observation}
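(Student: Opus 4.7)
All three inequalities of the observation reduce to Cauchy--Schwarz applied to the scalar factors in the three inner-product identities displayed immediately above the statement. The plan is simply to bound each of $\bigl|\sum_{j} \beta_j\overline{\gamma_j}\bigr|$, $\bigl|\sum_j \gamma_j\overline{\alpha_{j+1}}\bigr|$, and $\bigl|\alpha_0\overline{\beta_0} + \frac{1}{\sqrt{1+|a|^2}}\sum_{j\geq 1}\alpha_j\overline{\beta_j}\bigr|$, and then read off the final estimate from the prefactor $|a|/(1+|a|^2)$ or $|a|/\sqrt{1+|a|^2}$ that multiplies it.

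For the first two inequalities I would apply Cauchy--Schwarz in $\ell^2$ directly. Since $(\beta_j)$, $(\gamma_j)$, and $(\alpha_{j+1})_{j\geq 0}$ all have $\ell^2$-norm at most $1$ (using $\sum|\alpha_j|^2 = \sum|\beta_j|^2 = \sum|\gamma_j|^2 = 1$), the two sums are each bounded by $1$ in modulus. Multiplying by the common prefactor $|a|/(1+|a|^2)$ then yields the first two claimed bounds at once.

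The third inequality requires one small extra step because two different coefficients, $1/\sqrt{1+|a|^2}$ and $1/(1+|a|^2)$, appear in $\langle f_1,f_2\rangle$. I would factor out the larger weight $|a|/\sqrt{1+|a|^2}$, reducing the claim to
\[
|\alpha_0||\beta_0| + \frac{1}{\sqrt{1+|a|^2}}\sum_{j=1}^\infty |\alpha_j||\beta_j|\leq 1.
\]
Since $1/\sqrt{1+|a|^2}\leq 1$, it is enough to show $\sum_{j\geq 0}|\alpha_j||\beta_j|\leq 1$, which is a single application of Cauchy--Schwarz to the unit $\ell^2$-vectors $(|\alpha_j|)$ and $(|\beta_j|)$. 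I do not expect any real obstacle here; the only point of care is precisely this coefficient mismatch in $\langle f_1,f_2\rangle$, which is absorbed by the crude bound $1/\sqrt{1+|a|^2}\leq 1$ and explains why the third estimate has a different denominator than the first two.
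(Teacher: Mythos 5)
Your proposal is correct and is essentially the paper's own (implicit) argument: the authors state Observation \ref{ob1} as an immediate consequence of the three displayed inner-product identities, and the intended verification is exactly the Cauchy--Schwarz bounds on the coefficient sums that you carry out, including the factoring-out of the larger weight $|a|/\sqrt{1+|a|^2}$ and the crude bound $1/\sqrt{1+|a|^2}\leqslant 1$ for the third inequality.
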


\begin{observation}\label{ob2}
Suppose $f_k\in\mathrm{Ker}(C_\varphi^*-\overline{\varphi'(a)}^{k-1})\backslash\{0\}$ for $k=1,2,3$. Then
$$\left|\frac{\langle f_3,f_1\rangle}{||f_3||||f_1||}\right|^2+\left|\frac{\langle f_1,f_2\rangle}{||f_1||||f_2||}\right|^2<\frac{2|a|^2}{(1+|a|^2)^2}.$$
\end{observation}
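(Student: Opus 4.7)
The plan is to reduce the claim to an inequality between weighted $\ell^2$ inner products and apply Cauchy-Schwarz with a carefully chosen weighting that absorbs the asymmetric term $\sqrt{1+|a|^2}\,\alpha_0\overline{\beta_0}$ appearing in the formula for $\langle f_1,f_2\rangle$. Using the explicit formulas displayed just above the observation, after factoring out the common constant $|a|/(1+|a|^2)$ from both normalized inner products, the target becomes
$$|A|^2+|B|^2<2, \qquad A:=\sum_{j=0}^{\infty}\gamma_j\overline{\alpha_{j+1}}, \qquad B:=\sqrt{1+|a|^2}\,\alpha_0\overline{\beta_0}+\sum_{j=1}^{\infty}\alpha_j\overline{\beta_j}.$$

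A direct Cauchy-Schwarz bound for $A$ gives $|A|^2\leq 1-|\alpha_0|^2$, using $\sum|\gamma_j|^2=1$ and $\sum_{j\geq 0}|\alpha_{j+1}|^2=1-|\alpha_0|^2$. The subtler step is bounding $B$ with the right weighting: I would view $B$ as the $\ell^2$ inner product of $u=(\sqrt{1+|a|^2}\,\alpha_0,\alpha_1,\alpha_2,\ldots)$ with $v=(\beta_0,\beta_1,\ldots)$. Since $\|v\|^2=1$ and $\|u\|^2=(1+|a|^2)|\alpha_0|^2+\sum_{j\geq 1}|\alpha_j|^2=1+|a|^2|\alpha_0|^2$, Cauchy-Schwarz yields $|B|^2\leq 1+|a|^2|\alpha_0|^2$. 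Summing the two bounds produces the clean estimate
$$|A|^2+|B|^2\leq 2-(1-|a|^2)|\alpha_0|^2,$$
which is strictly less than $2$ whenever $\alpha_0\neq 0$, since $|a|<1$.

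The main obstacle will be handling the boundary case $\alpha_0=0$, where the Cauchy-Schwarz chain above only delivers $|A|^2+|B|^2\leq 2$. Strict inequality in that case cannot come from the two Cauchy-Schwarz estimates alone, since simultaneous equality in both is consistent with the rigid alignment $\gamma_j=c\,\alpha_{j+1}$ together with $\beta_0=0$ and $\beta_j=c'\alpha_j$ for $j\geq 1$, with $|c|=|c'|=1$. To close the gap I expect one must sharpen the argument with additional information, for instance by coupling back to $\langle f_2,f_3\rangle$ through Observation~\ref{ob1} to extract a joint constraint on $(\alpha,\beta,\gamma)$, or by replacing the weighted Cauchy-Schwarz with a more refined estimate tailored to the block structure of the three eigenspace projections. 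This rigidity step, rather than the initial Cauchy-Schwarz reduction, is the genuinely delicate part of the argument; the opening reduction and the weighted Cauchy-Schwarz are otherwise routine.
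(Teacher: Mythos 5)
Your reduction and your two Cauchy--Schwarz estimates are precisely the paper's argument: the paper bounds $|\langle f_1,f_2\rangle|/(\|f_1\|\|f_2\|)$ by $\tfrac{|a|}{1+|a|^2}\sqrt{1+|a|^2|\alpha_0|^2}$ (your bound on $|B|$, obtained there via the triangle inequality followed by Cauchy--Schwarz on the pair $(|\beta_0|,\sqrt{1-|\beta_0|^2})$) and bounds $|\langle f_3,f_1\rangle|/(\|f_3\|\|f_1\|)$ by $\tfrac{|a|}{1+|a|^2}\sqrt{1-|\alpha_0|^2}$ (your bound on $|A|$), then concludes by asserting $\sqrt{1+|a|^2|\alpha_0|^2}<\sqrt{1+|\alpha_0|^2}$. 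That last inequality is strict only when $\alpha_0\neq 0$, which is exactly the case you isolate as problematic. So up to the point where you stop, your proposal and the paper's proof coincide; the paper simply writes a strict ``$<$'' where, for $\alpha_0=0$, only equality holds.

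The gap you flag is therefore real, and it cannot be closed, because the rigid alignment you describe is realizable in the eigenspaces: take $f_3=(e_2-ae_1)/\sqrt{1+|a|^2}$, $f_1=(e_3-ae_2)/\sqrt{1+|a|^2}$, $f_2=(e_4-ae_3)/\sqrt{1+|a|^2}$, i.e.\ $\alpha_0=\beta_0=0$, $\gamma_0=\alpha_1=\beta_1=1$ and all other coefficients zero. These are unit vectors in $\mathrm{Ker}(C_\varphi^*-\mu^2)$, $\mathrm{Ker}(C_\varphi^*-I)$, $\mathrm{Ker}(C_\varphi^*-\mu)$ respectively, and a direct computation gives $\langle f_3,f_1\rangle=\langle f_1,f_2\rangle=-\overline{a}/(1+|a|^2)$, so the left-hand side of the Observation equals $2|a|^2/(1+|a|^2)^2$ exactly. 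Hence the statement is false as printed and should read ``$\leqslant$''; no coupling to $\langle f_2,f_3\rangle$ or refinement of Cauchy--Schwarz can restore strictness. (The later consequences are not fatally damaged: one can check that any configuration achieving equality forces $\alpha_0=\beta_0=0$ with $\gamma_j\propto\alpha_{j+1}$ and $\beta_j\propto\alpha_j$, and then $|\langle f_2,f_3\rangle|<\tfrac{|a|}{1+|a|^2}\|f_2\|\|f_3\|$ strictly, so Observation~\ref{ob4} survives with a supplementary argument --- but that argument is missing from the paper, and your instinct that the $\alpha_0=0$ case is ``the genuinely delicate part'' is exactly right.)
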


\begin{proof}
\begin{align*}
\frac{|\langle f_1,f_2\rangle|}{||f_1||||f_2||}&\leqslant\frac{|a|}{\sqrt{1+|a|^2}}|\alpha_0||\beta_0|+\frac{|a|}{1+|a|^2}\sqrt{1-|\alpha_0|^2}\sqrt{1-|\beta_0|^2}\\
&\leqslant\sqrt{\frac{|a|^2}{1+|a|^2}|\alpha_0|^2+\frac{|a|^2}{(1+|a|^2)^2}(1-|\alpha_0|^2)}.\\
&<\frac{|a|}{1+|a|^2}\sqrt{1+|\alpha_0|^2},
\end{align*}
and
\begin{align*}
\frac{|\langle f_3,f_1\rangle|}{||f_3||||f_1||}&\leqslant\frac{|a|}{1+|a|^2}\sqrt{1-|\alpha_0|^2}.
\end{align*}
So we have
\begin{align*}
\left|\frac{\langle f_3,f_1\rangle}{||f_3||||f_1||}\right|^2+\left|\frac{\langle f_1,f_2\rangle}{||f_1||||f_2||}\right|^2<\frac{2|a|^2}{(1+|a|^2)^2}.
\end{align*}
\end{proof}

\begin{remark}\label{obr}
Since $a$ is the fixed point of $\varphi$ in $D$, Observations \ref{ob1} and \ref{ob2} actually show that $\frac{|\langle f_2,f_3\rangle|}{||f_2||||f_3||}\leqslant\frac{1}{2}$ and $\left|\frac{\langle f_3,f_1\rangle}{||f_3||||f_1||}\right|^2+\left|\frac{\langle f_1,f_2\rangle}{||f_1||||f_2||}\right|^2<\frac{1}{2}$.
\end{remark}

\begin{observation}\label{ob4}
We can never find $f_k\in\mathrm{Ker}(C_\varphi^*-\overline{\varphi'(a)}^{k-1})\backslash\{0\}$ for $k=1,2,3$ such that
$$\frac{|\langle f_2,f_3\rangle|}{||f_2||||f_3||}=\frac{|\langle f_3,f_1\rangle|}{||f_3||||f_1||}=\frac{|\langle f_1,f_2\rangle|}{||f_1||||f_2||}=\frac{|a|}{1+|a|^2}.$$
\end{observation}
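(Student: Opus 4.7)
The plan is to argue by contradiction, assuming such $f_1, f_2, f_3$ exist and showing that the coefficient sequences $(\alpha_j), (\beta_j), (\gamma_j)$ must all vanish, contradicting their prescribed $\ell^2$-normalizations. Rather than going through Observation~\ref{ob2}, I will work directly from the three explicit coefficient formulas for $\langle f_i, f_j\rangle$ displayed just before Observation~\ref{ob1}. Each hypothesised equality $|\langle f_i, f_j\rangle|/(\|f_i\|\|f_j\|) = |a|/(1+|a|^2)$ is the saturation of a Cauchy--Schwarz inequality, and each saturation will give a proportionality between two of the sequences; the three proportionalities will then compose into a cyclic shift that cascades to kill every coefficient.

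The first step examines $\langle f_3, f_1\rangle$. Its formula forces $|\sum_{j\geq 0}\gamma_j\overline{\alpha_{j+1}}| = 1$, while Cauchy--Schwarz bounds this sum by $(\sum|\gamma_j|^2)^{1/2}(\sum_{j\geq 0}|\alpha_{j+1}|^2)^{1/2} = \sqrt{1-|\alpha_0|^2}$. Saturation therefore simultaneously forces $\alpha_0 = 0$ and a relation $\alpha_{j+1} = c_\alpha \gamma_j$ for some unimodular $c_\alpha$. The second step uses $\langle f_1, f_2\rangle$: the $\alpha_0\overline{\beta_0}$ term vanishes because $\alpha_0 = 0$, so $|\sum_{j\geq 1}\alpha_j\overline{\beta_j}| = 1 \leq \sqrt{1-|\beta_0|^2}$, forcing $\beta_0 = 0$ and $\beta_j = c_\beta \alpha_j$ for $j\geq 1$. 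The third step uses $\langle f_2, f_3\rangle$, a saturated Cauchy--Schwarz on two already unit-norm sequences, to produce $\gamma_j = c_\gamma \beta_j$ for every $j \geq 0$.

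The cascade is then a one-line induction: $\beta_0 = 0$ gives $\gamma_0 = c_\gamma \beta_0 = 0$, which gives $\alpha_1 = c_\alpha \gamma_0 = 0$, then $\beta_1 = c_\beta \alpha_1 = 0$, then $\gamma_1 = c_\gamma \beta_1 = 0$, then $\alpha_2 = c_\alpha \gamma_1 = 0$, and so on. Hence $\alpha_j = \beta_j = \gamma_j = 0$ for every $j$, contradicting $\sum|\alpha_j|^2 = 1$.

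The one delicate point I anticipate is the first step: one must notice that because of the shifted index $\alpha_{j+1}$ the relevant Cauchy--Schwarz upper bound is the smaller $\sqrt{1-|\alpha_0|^2}$, not $1$, and it is this hidden slack that forces $\alpha_0 = 0$ at the outset and makes the cascade in the third step actually close. After that observation, the remaining saturations and the induction are entirely routine.
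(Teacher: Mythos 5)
Your argument is correct, but it is not the route the paper takes: the paper disposes of this observation in one line as a direct corollary of Observation~\ref{ob2}, since if all three normalized inner products equalled $\tfrac{|a|}{1+|a|^2}$ then the sum of squares of the last two would equal $\tfrac{2|a|^2}{(1+|a|^2)^2}$, contradicting the strict inequality there. Your proof instead ignores Observation~\ref{ob2} and runs a self-contained equality-case analysis of Cauchy--Schwarz on the three coefficient formulas, and every step checks out: the shifted index in $\sum_j \gamma_j\overline{\alpha_{j+1}}$ caps that sum at $\sqrt{1-|\alpha_0|^2}$, so saturation forces $\alpha_0=0$ and $\alpha_{j+1}=c_\alpha\gamma_j$; then the vanishing of the $\alpha_0\overline{\beta_0}$ term caps $\sum_{j\geqslant 1}\alpha_j\overline{\beta_j}$ at $\sqrt{1-|\beta_0|^2}$, forcing $\beta_0=0$ and $\beta_j=c_\beta\alpha_j$; the third saturation gives $\gamma_j=c_\gamma\beta_j$; and the cyclic shift then annihilates all coefficients, contradicting the normalizations. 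It is worth noting that the ``hidden slack'' you isolate in the first step is exactly the mechanism behind the paper's proof of Observation~\ref{ob2} (the bound $\tfrac{|a|}{1+|a|^2}\sqrt{1-|\alpha_0|^2}$ appears there verbatim), so the two arguments exploit the same structural fact; the paper packages it once as a quantitative strict inequality that is reused later (e.g.\ in Theorem~\ref{fan} and in the openness lemma), whereas your version is a purely qualitative rigidity argument that is longer here but arguably more transparent about \emph{why} simultaneous saturation is impossible. Your proof is acceptable as a standalone replacement, though in the context of the paper it duplicates work already done.
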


\begin{proof}
It is a direct corollary of Observations \ref{ob1} and \ref{ob2}.
\end{proof}

\begin{observation}\label{ob3}
For any $\bm{\theta}=(\theta_1, \theta_2, \theta_3)\in\mathbb{R}^3$ and $\epsilon>0$, one can find $f_k\in\mathrm{Ker}(C_\varphi^*-\overline{\varphi'(a)}^{k-1})\backslash\{0\}$ for $k=1,2,3$, such that
$$0\leqslant e^{i(\theta_{k_1}+\theta_{k_2}-\sum_{k=1}^3\theta_k)}\cdot\frac{\langle f_{k_1},f_{k_2}\rangle}{||f_{k_1}||||f_{k_2}||}+\frac{|a|}{1+|a|^2}<\epsilon$$
for $k_1\ne k_2$.
\end{observation}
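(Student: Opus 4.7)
The plan is to construct the $f_k$ by hand, starting from the simplification $\alpha_0=\beta_0=0$. With this choice the anomalous $\sqrt{1+|a|^2}$ term in the formula for $\langle f_1,f_2\rangle$ drops out, and all three required estimates reduce to bilinear conditions on three unit sequences. After the reindexing $A_k=\alpha_{k+1}$, $B_k=\beta_{k+1}$, $C_k=\gamma_k$, and writing $\zeta_j=(a/|a|)e^{i\theta_j}$, the three approximate equalities I need to arrange are
$$\sum_{k\geqslant 0}B_k\overline{C_{k+1}}\approx\zeta_1,\qquad\langle C,A\rangle\approx\zeta_2,\qquad\langle A,B\rangle\approx\zeta_3.$$

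Next I would use a common modulated block ansatz: set $A_k=\frac{1}{\sqrt{N}}e^{ik\nu}$, $B_k=\frac{\overline{\zeta_3}}{\sqrt{N}}e^{ik\nu}$, and $C_k=\frac{\zeta_2}{\sqrt{N}}e^{ik\nu}$ for $0\leqslant k\leqslant N-1$, with zero elsewhere. A direct calculation gives $\langle A,B\rangle=\zeta_3$ and $\langle C,A\rangle=\zeta_2$ exactly, while the shifted sum telescopes to $\frac{N-1}{N}\overline{\zeta_2\zeta_3}e^{-i\nu}$. Since $|\zeta_1\zeta_2\zeta_3|=1$, I can pick $\nu\in\mathbb{R}$ with $e^{-i\nu}=\zeta_1\zeta_2\zeta_3$, making the shifted sum equal $\frac{N-1}{N}\zeta_1$. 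Plugging these values back, the quantity in the statement is exactly $0$ for the pairs $(k_1,k_2)=(3,1)$ and $(1,2)$, and equals $|a|/\bigl((1+|a|^2)N\bigr)$ for the pair $(2,3)$; choosing $N>|a|/\bigl((1+|a|^2)\epsilon\bigr)$ finishes the argument.

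The main conceptual point is why this construction is not obstructed by Observation~\ref{ob4}. The three target phases $\zeta_1,\zeta_2,\zeta_3$ are linked by a single compatibility constraint --- the cyclic product $\zeta_1\zeta_2\zeta_3$ --- and the common modulation rate $\nu$ supplies exactly one extra scalar parameter to absorb it. The strict inequality forced by Observation~\ref{ob4} then resurfaces as the relative error $1/N$ coming from the index shift in $\sum B_k\overline{C_{k+1}}$; this error vanishes as $N\to\infty$ but is never zero for any finite $N$, which is precisely what makes the approximate statement achievable while the exact one is not.
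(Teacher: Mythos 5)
Your construction is correct and follows essentially the same route as the paper's proof: choose coefficient sequences in the Guyker basis whose phases are aligned (via one free modulation rate, your $\nu$, the paper's $\eta_3$) so that all three normalized inner products acquire exactly the prescribed arguments, while near-shift-invariance of the moduli pushes the shifted sum's modulus arbitrarily close to $\frac{|a|}{1+|a|^2}$. The only difference is cosmetic --- you use flat blocks of length $N$ with deficit $\frac{N-1}{N}$ where the paper uses geometric weights $\sqrt{(1-\rho)\rho^j}$ with deficit $\rho^{1/2}$, $\rho\to1$.
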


\begin{proof}
Take
$$\eta_1=-\mathrm{arg}a-\theta_1;$$
$$\eta_2=-2\mathrm{arg}a-\theta_1-\theta_2;$$
$$\eta_3=-3\mathrm{arg}a-\theta_1-\theta_2-\theta_3.$$
Then let
$\alpha_0=0$ and
$$\alpha_{j+1}=e^{i(\eta_2+j\eta_3)}\sqrt{(1-\rho)\rho^j};$$
$$\beta_j=e^{ij\eta_3}\sqrt{(1-\rho)\rho^j};$$
$$\gamma_j=e^{i(\eta_1+j\eta_3)}\sqrt{(1-\rho)\rho^j},$$
for $j=0,1,2...,$ where $\rho\in(0,1)$. Thus we have
\begin{align*}
\frac{\langle f_2,f_3\rangle}{||f_2||||f_3||}&=-\frac{\overline{a}}{1+|a|^2}\sum_{j=0}^{\infty}\beta_j\overline{\gamma_j}\\
&=-\frac{\overline{a}}{1+|a|^2}\sum_{j=0}^{\infty}e^{ij\eta_3}\sqrt{(1-\rho)\rho^j}e^{-i(\eta_1+j\eta_3)}\sqrt{(1-\rho)\rho^j}\\
&=-e^{i\theta_1}\frac{|a|}{1+|a|^2};
\end{align*}
\begin{align*}
\frac{\langle f_3,f_1\rangle}{||f_3||||f_1||}&=-\frac{\overline{a}}{1+|a|^2}\sum_{j=0}^{\infty}\gamma_j\overline{\alpha_{j+1}}\\
&=-\frac{\overline{a}}{1+|a|^2}\sum_{j=0}^{\infty}e^{i(\eta_1+j\eta_3)}\sqrt{(1-\rho)\rho^j}e^{-i(\eta_2+j\eta_3)}\sqrt{(1-\rho)\rho^j}\\
&=-e^{i\theta_2}\frac{|a|}{1+|a|^2};
\end{align*}
\begin{align*}
\frac{\langle f_1,f_2\rangle}{||f_1||||f_2||}&=-\frac{\overline{a}}{1+|a|^2}\sum_{j=1}^{\infty}\alpha_j\overline{\beta_j}\\
&=-\frac{\overline{a}}{1+|a|^2}\sum_{j=1}^{\infty}e^{i(\eta_2+(j-1)\eta_3)}\sqrt{(1-\rho)\rho^{j-1}}e^{-ij\eta_3}\sqrt{(1-\rho)\rho^j}\\
&=-e^{i\theta_3}\frac{|a|}{1+|a|^2}\rho^{1/2}.
\end{align*}
By letting $\rho\to 1$, we get our conclusion.
\end{proof}

These observations play critical roles in our discussion in the next subsection.

\subsection{The Numerical Range}
From now on, we set $\mu=\overline{\varphi'(a)}$ for convenience. Note that $\mu$ is a third root of unity. Define
\begin{align*}
\mathcal{Q}=\Big\{\bm{\delta}=&(\delta_1,\delta_2,\delta_3)\in\mathbb{R}^3:\exists f_k\in\mathrm{Ker}(C_\varphi^*-\mu^{k-1})\backslash\{0\}\\
 &\mathrm{\; for\;} k=1,2,3, \mathrm{\; s.\; t.\;}\left(\frac{|\langle f_2,f_3\rangle|}{||f_2||||f_3||},\frac{|\langle f_3,f_1\rangle|}{||f_3||||f_1||},\frac{|\langle f_1,f_2\rangle|}{||f_1||||f_2||}\right)=\bm{\delta}\Big\}
\end{align*}
as a subset of $\mathbb{R}^3$. For given $\bm{\delta}=(\delta_1,\delta_2,\delta_3)\in\mathcal{Q}$, let
\begin{align*}
\mathcal{H}_{\bm{\delta}}=\Big{\{}&f=f_1+f_2+f_3\in H^2(D): f_k\in\mathrm{Ker}(C_\varphi^*-\mu^{k-1}),\\
 &\left(|\langle f_2,f_3\rangle|,|\langle f_3,f_1\rangle|,|\langle f_1,f_2\rangle|\right)=(\delta_1||f_2||||f_3||,\delta_2||f_3||||f_1||,\delta_3||f_1||||f_2||)\Big{\}},
\end{align*}
and
$$W_{\bm{\delta}}(C_\varphi^*)=\left\{\langle C_\varphi^*f,f\rangle: ||f||=1, f\in\mathcal{H}_{\bm{\delta}}\right\}.$$

For each $f=f_1+f_2+f_3\in \mathcal{H}_{\bm{\delta}}$, we can find $\bm{\theta}=(\theta_1, \theta_2, \theta_3)\in\mathbb{R}^3$ such that
\begin{align*}
\left(\langle f_2,f_3\rangle,\langle f_3,f_1\rangle,\langle f_1,f_2\rangle\right)=(\delta_1e^{i\theta_1}||f_2||||f_3||,\delta_2e^{i\theta_2}||f_3||||f_1||,\delta_3e^{i\theta_3}||f_1||||f_2||).
\end{align*}
Then
\begin{align}\label{*1}
\langle C_\varphi^*f,f\rangle=&\langle f_1+\mu f_2+\mu^2f_3,f_1+f_2+f_3\rangle\\ \nonumber
=&||f_1||^2-2||f_2||||f_3||\delta_1\cos{(\theta_1-\frac{\pi}{3})}\\ \nonumber
&+\mu\left(||f_2||^2-2||f_1||||f_3||\delta_2\cos{(\theta_2-\frac{\pi}{3})}\right)\\ \nonumber
&+\mu^2\left(||f_3||^2-2||f_1||||f_2||\delta_3\cos{(\theta_3-\frac{\pi}{3})}\right).
\end{align}
At the same time, we have
\begin{align}\label{*2}
||f||^2=&||f_1||^2+||f_2||^2+||f_3||^2+2||f_1||||f_2||\delta_3\cos{\theta_3}\\ \nonumber
&+||f_2||||f_3||\delta_1\cos{\theta_1}+||f_1||||f_3||\delta_2\cos{\theta_2}.
\end{align}

Now for $\alpha\in\mathbb{R}$, let
$$\Lambda=\Lambda(\alpha,\bm{\delta})=\sup_{w\in W_{\bm{\delta}}(C_\varphi^*)}{\mathrm{Re}(e^{-i\alpha}w)},$$
and
$$\Lambda_0(\alpha)=\sup_{\bm{\delta}\in\mathcal{Q}}{\Lambda(\alpha,\bm{\delta})}=\sup_{w\in W(C_\varphi^*)}{\mathrm{Re}(e^{-i\alpha}w)}.$$
We introduce the following partial order on $\mathcal{Q}\subset\mathbb{R}^3$: $\bm{\delta}\leqslant\tilde{\bm{\delta}}$ if $\delta_k\leqslant\tilde{\delta}_k$ for $k=1,2,3$. Moreover, $\bm{\delta}<\tilde{\bm{\delta}}$ if $\bm{\delta}\leqslant\tilde{\bm{\delta}}$ and $\bm{\delta}\ne\tilde{\bm{\delta}}$. Set $\bm{\Delta}=\left(\Delta,\Delta,\Delta\right)$, where $\Delta=\frac{|a|}{1+|a|^2}$. Then by Observations \ref{ob1} and \ref{ob4},
$$\Lambda_0(\alpha)=\max\left\{\sup_{\bm{\delta}<\bm{\Delta}}{\Lambda(\alpha,\bm{\delta})},\sup_{\delta_3>\Delta}{\Lambda(\alpha,\bm{\delta})}\right\}.$$

Note that for each $w=\langle C_\varphi^*f,f\rangle\in W_{\bm{\delta}}(C_\varphi^*)$, by (\ref{*1}) we have
\begin{align}\label{*3}
{\mathrm{Re}(e^{-i\alpha}w)}=&\zeta_1\left(||f_1||^2-2||f_2||||f_3||\delta_1\cos{(\theta_1-\frac{\pi}{3})}\right)\\ \nonumber
&+\zeta_2\left(||f_2||^2-2||f_1||||f_3||\delta_2\cos{(\theta_2-\frac{\pi}{3})}\right)\\ \nonumber
&+\zeta_3\left(||f_3||^2-2||f_1||||f_2||\delta_3\cos{(\theta_3-\frac{\pi}{3})}\right),
\end{align}
where $\zeta_1=\cos\alpha$ and $\{\zeta_k\}_{k=1}^3$ are the roots of the equation $T_3(\zeta)-\cos{3\alpha}=0.$
Here $T_3$ is the Chebyshev polynomial of degree $3$, i.e., $T_3(\zeta)=4\zeta^3-3\zeta$.

For fixed $\alpha\in\mathbb{R}$ and $\bm{\delta}\in\mathcal{Q}$, define a symmetric matrix $M=M(\lambda,\bm{\phi})$ as follows:
\begin{tiny}
\begin{equation*}
\left(\begin{matrix}
\lambda-\zeta_1 & \left(\lambda\cos{\phi_3}+\zeta_3\cos{(\phi_3-\frac{\pi}{3})}\right)\delta_3 & \left(\lambda\cos{\phi_2}+\zeta_2\cos{(\phi_2-\frac{\pi}{3})}\right)\delta_2\\
&&\\
\left(\lambda\cos{\phi_3}+\zeta_3\cos{(\phi_3-\frac{\pi}{3})}\right)\delta_3 & \lambda-\zeta_2 & \left(\lambda\cos{\phi_1}+\zeta_1\cos{(\phi_1-\frac{\pi}{3})}\right)\delta_1\\
&&\\
\left(\lambda\cos{\phi_2}+\zeta_2\cos{(\phi_2-\frac{\pi}{3})}\right)\delta_2 & \left(\lambda\cos{\phi_1}+\zeta_1\cos{(\phi_1-\frac{\pi}{3})}\right)\delta_1 & \lambda-\zeta_3
\end{matrix}\right).
\end{equation*}
\end{tiny}
Here we require that the variable $\lambda$ is positive and the variables $\bm{\phi}=(\phi_1,\phi_2,\phi_3)$ are real. For each $\bm{x}=(x_1,x_2,x_3)\in(\mathbb{R}^+)^3$, we consider $\bm{x}M\bm{x}^T$ as a function with respect to $\bm{\phi}=(\phi_1,\phi_2,\phi_3)$, and assume it achieves its minimum at
$$\bm{\Phi}=\bm{\Phi}(\lambda)=(\Phi_1,\Phi_2,\Phi_3).$$
Then
$$\frac{\partial (\bm{x}M\bm{x}^T)}{\partial\phi_k}=-\frac{2x_1x_2x_3}{x_k}\left(\lambda\sin{\phi_k}+\zeta_k\sin{(\phi_k-\frac{\pi}{3})}\right)\delta_k$$
vanish at $\bm{\phi}=\bm{\Phi}$ for $k=1, 2, 3$.
Note that in order to reach the minimum, it must be required that $\zeta_k\sin\Phi_k\leqslant0$. So
$$\sin\Phi_k=\frac{-\sqrt{3}\zeta_k}{2\sqrt{\lambda^2+\lambda \zeta_k+\zeta_k^2}};$$
$$\cos\Phi_k=\frac{-2\lambda-\zeta_k}{2\sqrt{\lambda^2+\lambda \zeta_k+\zeta_k^2}}.$$
Thus we have
\begin{equation*}
\mathrm{det}M(\lambda,\bm{\Phi})=\left|\begin{matrix}
\lambda-\zeta_1 & -\sqrt{\lambda^2+\lambda \zeta_3+\zeta_3^2}\delta_3 & -\sqrt{\lambda^2+\lambda \zeta_2+\zeta_2^2}\delta_2\\
&&\\
-\sqrt{\lambda^2+\lambda \zeta_3+\zeta_3^2}\delta_3 & \lambda-\zeta_2 & -\sqrt{\lambda^2+\lambda \zeta_1+\zeta_1^2}\delta_1\\
&&\\
-\sqrt{\lambda^2+\lambda \zeta_2+\zeta_2^2}\delta_2 & -\sqrt{\lambda^2+\lambda \zeta_1+\zeta_1^2}\delta_1 & \lambda-\zeta_3
\end{matrix}\right|
\end{equation*}

\begin{equation}\label{det}
=\prod_{j=1}^3(\lambda-\zeta_j)-\sum_{j=1}^3(\lambda^3-\zeta_j^3)\delta_j^2-2\prod_{j=1}^3\sqrt{\lambda^2+\lambda \zeta_j+\zeta_j^2}\cdot\delta_j.
\end{equation}
Let $\lambda=\Lambda'(\alpha,\bm{\delta})$ be the largest positive root of the equation $\mathrm{det}M(\lambda,\bm{\Phi})=0$. The existence of $\Lambda'$ is guaranteed by the proof of the next lemma.

\begin{lemma}
$M(\lambda,\bm{\Phi})$ is positive definite for all $\lambda>\Lambda'$. In particular, $\Lambda'\geqslant\max\{\zeta_1,\zeta_2,\zeta_3\}$.
\end{lemma}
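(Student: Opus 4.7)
The plan is a continuity-and-monotonicity argument. I would first establish that $M(\lambda,\bm{\Phi})$ is positive definite for all sufficiently large $\lambda$, and then propagate this property down to $(\Lambda',\infty)$ using the fact that the eigenvalues of a symmetric matrix depend continuously on its entries, while the entries of $M(\lambda,\bm{\Phi})$ depend continuously on $\lambda>0$ (through the explicit formulas for $\sin\Phi_k,\cos\Phi_k$).

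\emph{Asymptotic regime.} As $\lambda\to+\infty$, the ratios $(\lambda-\zeta_k)/\lambda\to 1$ and $\sqrt{\lambda^2+\lambda\zeta_k+\zeta_k^2}/\lambda\to 1$, so
\[
\lambda^{-1}M(\lambda,\bm{\Phi})\;\longrightarrow\;A:=\begin{pmatrix} 1 & -\delta_3 & -\delta_2\\ -\delta_3 & 1 & -\delta_1\\ -\delta_2 & -\delta_1 & 1\end{pmatrix}.
\]
By Sylvester's criterion it suffices to verify three inequalities for $A$: $1>0$ is immediate, $1-\delta_3^2>0$ follows from $\delta_3\le|a|/\sqrt{1+|a|^2}<1$ (Observation \ref{ob1}), and the tight inequality $\det A=1-\delta_1^2-\delta_2^2-\delta_3^2-2\delta_1\delta_2\delta_3>0$ follows by chaining $\delta_1\le\Delta$ (Observation \ref{ob1}), $\delta_2^2+\delta_3^2<2\Delta^2$ (Observation \ref{ob2}), and AM--GM $2\delta_2\delta_3\le\delta_2^2+\delta_3^2$ to get $\delta_1^2+\delta_2^2+\delta_3^2+2\delta_1\delta_2\delta_3<3\Delta^2+2\Delta^3<1$, where the last step uses $\Delta<1/2$ (equivalent to $|a|<1$). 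Thus $A$ is positive definite, and continuity of eigenvalues forces $M(\lambda,\bm{\Phi})$ to be positive definite for all large $\lambda$.

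\emph{Existence of $\Lambda'$ and the lower bound.} Set $\zeta_{\max}=\max\{\zeta_1,\zeta_2,\zeta_3\}$. Evaluating \eqref{det} at $\lambda=\zeta_{\max}$ annihilates $\prod_j(\lambda-\zeta_j)$ and the $\zeta_{\max}$-indexed term $(\zeta_{\max}^3-\zeta_{\max}^3)\delta^2$, leaving only non-positive contributions: $(\zeta_{\max}^3-\zeta_j^3)\delta_j^2\ge 0$ by monotonicity of $t\mapsto t^3$, and each factor $\zeta_{\max}^2+\zeta_{\max}\zeta_j+\zeta_j^2=(\zeta_{\max}+\zeta_j/2)^2+3\zeta_j^2/4\ge 0$. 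Hence $\det M(\zeta_{\max},\bm{\Phi})\le 0$. Combined with $\det M>0$ as $\lambda\to\infty$ and continuity in $\lambda$, the intermediate value theorem produces a zero in $[\zeta_{\max},\infty)$; since the set of positive zeros is closed in $(0,\infty)$ and bounded above, its supremum $\Lambda'$ is itself a zero and satisfies $\Lambda'\ge\zeta_{\max}$, which is the ``in particular'' assertion.

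\emph{Positive definiteness on $(\Lambda',\infty)$.} For $\lambda>\Lambda'$ we have $\det M(\lambda,\bm{\Phi})\ne 0$ by maximality of $\Lambda'$, so no eigenvalue of $M(\lambda,\bm{\Phi})$ vanishes on $(\Lambda',\infty)$. Coupled with positivity of all eigenvalues at large $\lambda$ and their continuous dependence on $\lambda$, this forces positive definiteness throughout $(\Lambda',\infty)$. The main obstacle I expect is the tightness of the asymptotic Sylvester estimate $\det A>0$: crude bounds like $\delta_j\le 1/2$ alone yield only $\le 1$, so it is genuinely Observations \ref{ob1}--\ref{ob2} together with the strict inequality $|a|<1$ that drive the argument.
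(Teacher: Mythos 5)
Your proposal is correct and follows essentially the same route as the paper's (very terse) proof: the paper likewise rests on the two facts that $\det M(\lambda,\bm{\Phi})\leqslant 0$ at $\lambda=\max\{\zeta_1,\zeta_2,\zeta_3\}$ and $\det M(\lambda,\bm{\Phi})>0$ for large $\lambda$, leaving the intermediate-value and eigenvalue-continuity steps implicit. Your write-up supplies exactly the details the paper omits, including the Sylvester verification of the limiting matrix $A$ via Observations \ref{ob1} and \ref{ob2} (the paper invokes Remark \ref{obr} for the same bounds).
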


\begin{proof}
Note that by Remark \ref{obr}, $\mathrm{det}M(\lambda,\bm{\Phi})\leqslant 0$ when $\lambda=\max\{\zeta_1,\zeta_2,\zeta_3\}$ and $\mathrm{det}M(\lambda,\bm{\Phi})>0$ for $\lambda$ large enough.
\end{proof}

\begin{corollary}\label{po1}
$M(\lambda,\bm{\phi})$ is positive definite for all $\lambda>\Lambda'$ and $\bm{\phi}\in\mathbb{R}^3$.
\end{corollary}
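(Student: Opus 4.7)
The plan is to deduce the corollary from the preceding lemma via a termwise bound that reduces the case of arbitrary $\bm{\phi}\in\mathbb{R}^3$ and arbitrary real $\bm{x}$ to the single matrix $M(\lambda,\bm{\Phi})$ evaluated on the componentwise-absolute-value vector $\bm{x}'=(|x_1|,|x_2|,|x_3|)$. The starting observation is that each off-diagonal entry of $M(\lambda,\bm{\phi})$ depends on only one component of $\bm{\phi}$; for example $M_{12}$ depends only on $\phi_3$ and can be rewritten as $\sqrt{\lambda^2+\lambda\zeta_3+\zeta_3^2}\,\cos(\phi_3-\psi_3)\,\delta_3$ for some phase $\psi_3$. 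Consequently
\[
|M_{ij}(\bm{\phi})|\;\leqslant\;\sqrt{\lambda^2+\lambda\zeta_k+\zeta_k^2}\,\delta_k\qquad\text{whenever }\{i,j,k\}=\{1,2,3\},
\]
and the explicit formulas for $\sin\Phi_k$ and $\cos\Phi_k$ already derived show that $M_{ij}(\bm{\Phi})=-\sqrt{\lambda^2+\lambda\zeta_k+\zeta_k^2}\,\delta_k$, so this bound is sharp and negative precisely at $\bm{\phi}=\bm{\Phi}$.

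Armed with this, I would compare the two quadratic forms directly. The diagonal entries are unaffected by $x_k\mapsto|x_k|$ since they enter through $x_k^2$, while for each off-diagonal pair $\{i,j\}$ with complementary index $k$,
\[
2x_ix_j\,M_{ij}(\bm{\phi})\;\geqslant\;-2|x_i||x_j|\sqrt{\lambda^2+\lambda\zeta_k+\zeta_k^2}\,\delta_k\;=\;2x_i'x_j'\,M_{ij}(\bm{\Phi}).
\]
Summing the three off-diagonal inequalities together with the identical diagonal contributions yields
\[
\bm{x}M(\lambda,\bm{\phi})\bm{x}^T\;\geqslant\;\bm{x}'M(\lambda,\bm{\Phi})(\bm{x}')^T.
\]

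The preceding lemma guarantees that $M(\lambda,\bm{\Phi})$ is positive definite for $\lambda>\Lambda'$, so the right-hand side is strictly positive whenever $\bm{x}'\neq\bm{0}$, equivalently $\bm{x}\neq\bm{0}$, and the corollary follows. The only genuinely delicate step is verifying the sinusoidal bound on each off-diagonal entry and checking that the bound is attained with the negative sign at $\bm{\Phi}$; both facts are essentially immediate from collapsing $\lambda\cos\phi+\zeta\cos(\phi-\pi/3)$ into a single amplitude-phase cosine and from the explicit values of $\sin\Phi_k,\cos\Phi_k$ recorded just before the preceding lemma.
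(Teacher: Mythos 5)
Your proof is correct and follows essentially the same route as the paper: both reduce $\bm{x}M(\lambda,\bm{\phi})\bm{x}^T$ for arbitrary real $\bm{x}$ and $\bm{\phi}$ to $\tilde{\bm{x}}M(\lambda,\bm{\Phi})\tilde{\bm{x}}^T$ with $\tilde{\bm{x}}=(|x_1|,|x_2|,|x_3|)$ and then invoke the preceding lemma. Your termwise amplitude--phase bound $|M_{ij}(\bm{\phi})|\leqslant\sqrt{\lambda^2+\lambda\zeta_k+\zeta_k^2}\,\delta_k$, attained with the negative sign at $\bm{\Phi}$, is just a more explicit way of writing the paper's combination of a sign-absorbing phase shift with the minimality of $\bm{\Phi}$.
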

\begin{proof}
For any nonzero $\bm{x}=(x_1,x_2,x_3)\in\mathbb{R}^3$, take $\tilde{\bm{x}}=(|x_1|,|x_2|,|x_3|)\in(\mathbb{R}^+)^3$. Then
\begin{align*}
\bm{x}M(\lambda,\bm{\phi})\bm{x}^T=\tilde{\bm{x}}M(\lambda,\tilde{\bm{\phi}})\tilde{\bm{x}}^T,
\end{align*}
where $\tilde{\bm{\phi}}=(\phi_1+s_1\pi,\phi_2+s_2\pi,\phi_3+s_3\pi)$ and $s_k=\sum_{l\ne k}\mathrm{sgn} x_l$. Therefore,
\begin{align*}
\bm{x}M(\lambda,\bm{\phi})\bm{x}^T\geqslant\tilde{\bm{x}}M(\lambda,\bm{\Phi})\tilde{\bm{x}}^T>0.
\end{align*}
\end{proof}

So far we have defined two quantities, $\Lambda$ and $\Lambda'$, as functions of $\alpha\in \mathbb{R}$ and $\bm{\delta}\in\mathcal{Q}$. In what follows, we will investigate the relationship between  $\Lambda$ and $\Lambda'$.

\begin{proposition}\label{ppt0}
$\Lambda$ is no larger than $\Lambda'$.
\end{proposition}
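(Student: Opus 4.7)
The plan is to reduce the inequality to a quadratic form: for any unit-norm $f \in \mathcal{H}_{\bm{\delta}}$ with phases $\bm{\theta}=(\theta_1,\theta_2,\theta_3)$ and norms $\bm{x}=(\|f_1\|,\|f_2\|,\|f_3\|)\in(\mathbb{R}^+)^3$, I would show that
$$\lambda\|f\|^2-\mathrm{Re}(e^{-i\alpha}\langle C_\varphi^* f,f\rangle)=\bm{x}\,M(\lambda,\bm{\theta})\,\bm{x}^T,$$
where $M$ is the symmetric matrix introduced just before Corollary \ref{po1}. Granting this identity, Corollary \ref{po1} says that for any $\lambda>\Lambda'$ the right-hand side is strictly positive (since $\bm{x}\neq 0$ because $\|f\|=1$), and hence $\lambda>\mathrm{Re}(e^{-i\alpha}\langle C_\varphi^* f,f\rangle)$ for every unit vector $f\in\mathcal{H}_{\bm{\delta}}$. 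Taking supremum over such $f$ yields $\lambda\geqslant\Lambda$, and then letting $\lambda\downarrow\Lambda'$ gives $\Lambda'\geqslant\Lambda$, which is exactly the claim.

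The verification of the quadratic-form identity is the only real content, but it is a direct comparison of two expressions already written in the paper. From (\ref{*3}), the three terms $\zeta_k\|f_k\|^2$ contribute $\sum_k (\lambda-\zeta_k)\|f_k\|^2$ to $\lambda\|f\|^2-\mathrm{Re}(e^{-i\alpha}\langle C_\varphi^* f,f\rangle)$ after subtracting from the squared-norm expansion $\|f\|^2=\|f_1+f_2+f_3\|^2$. These are precisely the diagonal entries of $M\bm{x}^T$ paired with $\bm{x}$. The cross terms are exactly the pairings
$$2\mathrm{Re}\langle f_i,f_j\rangle=2\delta_\ell\|f_i\|\|f_j\|\cos\theta_\ell,$$
(with the appropriate index $\ell$) inside $\|f\|^2$, together with the terms $-2\zeta_k\delta_k\|f_{k+1}\|\|f_{k-1}\|\cos(\theta_k-\pi/3)$ coming from (\ref{*3}). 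Combining the $\lambda\cos\theta_\ell$ contribution with the $\zeta_k\cos(\theta_k-\pi/3)$ contribution produces off-diagonal entries of $M$ evaluated at $\bm{\phi}=\bm{\theta}$, which matches the form written out in the definition of $M(\lambda,\bm{\phi})$ (with the indices $\delta_1,\delta_2,\delta_3$ appearing in exactly the positions shown).

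There is no real obstacle here; the only thing to be careful about is the bookkeeping between the indices of $(\delta_1,\delta_2,\delta_3)$ and the pairs of subscripts on $f_i,f_j$, and the sign/phase convention connecting $\theta_k$ to $\phi_k$. Once the identity is in hand, the proposition is immediate from Corollary \ref{po1}, since positive definiteness of $M(\lambda,\bm{\theta})$ (for $\lambda>\Lambda'$, at the specific argument $\bm{\phi}=\bm{\theta}$) gives $\bm{x}M\bm{x}^T>0$, so $\lambda>\mathrm{Re}(e^{-i\alpha}\langle C_\varphi^* f,f\rangle)$ uniformly in $f$, and the proposition follows by passing to the supremum and then the infimum in $\lambda$.
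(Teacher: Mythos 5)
Your proposal is correct and follows essentially the same route as the paper: both reduce the claim to the quadratic-form identity $\lambda\|f\|^2-\mathrm{Re}(e^{-i\alpha}\langle C_\varphi^*f,f\rangle)=\bm{x}M(\lambda,\bm{\theta})\bm{x}^T$ coming from (\ref{*1}) and (\ref{*2}), and then invoke Corollary \ref{po1}; the paper applies positive semidefiniteness at $\lambda=\Lambda'$ directly while you use strict positivity for $\lambda>\Lambda'$ and let $\lambda\downarrow\Lambda'$, which is the same argument.
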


\begin{proof}
For arbitrary nonzero $f\in\mathcal{H}_{\bm{\delta}}$ we can write $f=\sum_{k=1}^3f_k$, where $f_k\in\mathrm{Ker}(C_\varphi^*-\mu^{k-1})$ for $k=1,2,3$. Take $\bm{x}=(||f_1||,||f_2||,||f_3||)\in\mathbb{R}^3$ and suppose that
\begin{align*}
\left(\langle f_2,f_3\rangle,\langle f_3,f_1\rangle,\langle f_1,f_2\rangle\right)=(\delta_1e^{i\theta_1}||f_2||||f_3||,\delta_2e^{i\theta_2}||f_3||||f_1||,\delta_3e^{i\theta_3}||f_1||||f_2||).
\end{align*}
Then by (\ref{*1}) and (\ref{*2}) we have
\begin{equation*}
\bm{x}M(\lambda,\bm{\theta})\bm{x}^T=\lambda ||f||^2-\mathrm{Re}(e^{-i\alpha}\langle C_\varphi^*f,f\rangle),
\end{equation*}
where $\bm{\theta}=(\theta_1,\theta_2,\theta_3)$.
According to Corollary \ref{po1}, $M(\Lambda',\bm{\theta})$ is positive semidefinite, so
$$\bm{x}M(\Lambda',\bm{\theta})\bm{x}^T=\Lambda' ||f||^2-\mathrm{Re}(e^{-i\alpha}\langle C_\varphi^*f,f\rangle)\geqslant 0,$$
that is,
$$\Lambda'\geqslant\mathrm{Re}\left(e^{-i\alpha}\left\langle C_\varphi^*\frac{f}{||f||},\frac{f}{||f||}\right\rangle\right).$$
Therefore, by the definition of $\Lambda$ we conclude that $\Lambda\leqslant\Lambda'$.
\end{proof}

\begin{lemma}\label{ppt}
$\Lambda'(\alpha,\bm{\delta})<\Lambda'(\alpha,\tilde{\bm{\delta}})$ whenever $\bm{\delta}<\tilde{\bm{\delta}}$.
\end{lemma}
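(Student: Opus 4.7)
The plan is to reduce the claim to strict monotonicity in $\bm{\delta}$ of the explicit expression for $\mathrm{det}\, M(\lambda,\bm{\Phi}(\lambda))$ in (\ref{det}), and then invoke the intermediate value theorem. The crucial preliminary observation is that the minimizer $\bm{\Phi}(\lambda)$, being given by the closed formulas for $\sin\Phi_k$ and $\cos\Phi_k$ displayed just before the previous lemma, depends only on $\lambda$ and $\alpha$ (through $\zeta_1,\zeta_2,\zeta_3$), not on $\bm{\delta}$. Hence, setting $F(\lambda;\bm{\delta}):=\mathrm{det}\, M(\lambda,\bm{\Phi}(\lambda))$, formula (\ref{det}) exhibits $F$ as a concrete function in which the dependence on $\bm{\delta}$ is perfectly transparent, and $\Lambda'(\alpha,\bm{\delta})$ is the largest positive zero of $F(\,\cdot\,;\bm{\delta})$.

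The heart of the argument is the identity
\[
F(\lambda;\bm{\delta})-F(\lambda;\tilde{\bm{\delta}}) = \sum_{j=1}^{3}(\lambda^{3}-\zeta_j^{3})(\tilde{\delta}_j^{2}-\delta_j^{2}) + 2\prod_{j=1}^{3}\sqrt{\lambda^{2}+\lambda\zeta_j+\zeta_j^{2}}\,(\tilde{\delta}_1\tilde{\delta}_2\tilde{\delta}_3-\delta_1\delta_2\delta_3).
\]
For $\bm{\delta}<\tilde{\bm{\delta}}$ and $\lambda>\max_{k}\zeta_{k}$, both summands on the right are non-negative and the first is strictly positive at any index $j$ with $\tilde{\delta}_j>\delta_j$; this yields $F(\lambda;\tilde{\bm{\delta}})<F(\lambda;\bm{\delta})$. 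A straightforward expansion shows that the leading term of $F(\lambda;\tilde{\bm{\delta}})$ is $(1-\sum_{j}\tilde{\delta}_j^{2}-2\tilde{\delta}_1\tilde{\delta}_2\tilde{\delta}_3)\lambda^{3}$; using the bounds $\tilde{\delta}_1\leqslant 1/2$ and $\tilde{\delta}_2^{2}+\tilde{\delta}_3^{2}<1/2$ recorded in Remark \ref{obr}, together with $2\tilde{\delta}_2\tilde{\delta}_3\leqslant\tilde{\delta}_2^{2}+\tilde{\delta}_3^{2}$, this leading coefficient is strictly positive, so $F(\lambda;\tilde{\bm{\delta}})\to+\infty$ as $\lambda\to+\infty$. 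Since $F(\Lambda'(\alpha,\bm{\delta});\bm{\delta})=0$, the monotonicity forces $F(\Lambda'(\alpha,\bm{\delta});\tilde{\bm{\delta}})<0$ (at least when $\Lambda'(\alpha,\bm{\delta})>\max_{k}\zeta_{k}$), and the intermediate value theorem produces a zero of $F(\,\cdot\,;\tilde{\bm{\delta}})$ strictly exceeding $\Lambda'(\alpha,\bm{\delta})$; this gives $\Lambda'(\alpha,\tilde{\bm{\delta}})>\Lambda'(\alpha,\bm{\delta})$, as desired.

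The principal obstacle I anticipate is the boundary scenario $\Lambda'(\alpha,\bm{\delta})=\max_{k}\zeta_{k}$, where the first sum in the identity above vanishes at the extremizing index and strict positivity of $F(\lambda;\bm{\delta})-F(\lambda;\tilde{\bm{\delta}})$ at $\lambda=\max_{k}\zeta_{k}$ is not automatic, particularly when $\bm{\delta}$ and $\tilde{\bm{\delta}}$ differ only in the extremal coordinate and at least one of the remaining two entries of $\bm{\delta}$ is zero. I plan to dispose of this either by noting that the product summand $2\prod\sqrt{\lambda^{2}+\lambda\zeta_j+\zeta_j^{2}}\,(\tilde{\delta}_1\tilde{\delta}_2\tilde{\delta}_3-\delta_1\delta_2\delta_3)$ is strictly positive whenever the two non-extremal entries of $\bm{\delta}$ are nonzero, or, in the residual degenerate subcases, by a direct perturbative analysis of the rank-deficient positive semidefinite matrix $M(\max_{k}\zeta_{k},\bm{\Phi})$.
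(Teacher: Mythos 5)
Your argument is essentially the paper's own proof: evaluate the explicit expression (\ref{det}) at $\lambda=\Lambda'(\alpha,\bm{\delta})$, use $\Lambda'(\alpha,\bm{\delta})\geqslant\max_k\{\zeta_k\}$ to see that replacing $\bm{\delta}$ by the larger $\tilde{\bm{\delta}}$ makes that expression negative there, and conclude from the eventual positivity of the determinant that its largest root must move strictly to the right. The additional details you supply --- the positivity of the leading coefficient via Remark \ref{obr}, and the flag of the boundary case $\Lambda'(\alpha,\bm{\delta})=\max_k\{\zeta_k\}$, where strict negativity of the first sum can genuinely fail --- are absent from the paper's two-line proof but consistent with it, and in the only place the lemma is invoked (with $\tilde{\bm{\delta}}=\bm{\Delta}$, all of whose entries are positive) your first proposed remedy, strict positivity of the product term, settles that degenerate case.
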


\begin{proof}
$\lambda=\Lambda'(\alpha,\bm{\delta})$ is a zero of (\ref{det}). Since $\bm{\delta}<\tilde{\bm{\delta}}$, by keeping in mind that $\Lambda'(\alpha,\bm{\delta})\geqslant\max\{\zeta_1,\zeta_2,\zeta_3\}$ we see that
\begin{equation*}
\prod_{j=1}^3(\lambda-\zeta_j)-\sum_{j=1}^3(\lambda^3-\zeta_j^3)\tilde{\delta}_j^2-2\prod_{j=1}^3\sqrt{\lambda^2+\lambda \zeta_j+\zeta_j^2}\cdot\tilde{\delta}_j.
\end{equation*}
is negative at $\lambda=\Lambda'(\alpha,\bm{\delta})$. So according to the definition of $\Lambda'$ we must have $\Lambda'(\alpha,\bm{\delta})<\Lambda'(\alpha,\tilde{\bm{\delta}})$.
\end{proof}

Now take $\Lambda'_0(\alpha)=\Lambda'(\alpha,\bm{\Delta})$. Then $\lambda=\Lambda'_0(\alpha)$ is the largest real zero of
\begin{equation}\label{equa}
\prod_{j=1}^3(\lambda-\zeta_j)-\Delta^2\cdot\sum_{j=1}^3(\lambda^3-\zeta_j^3)-2\Delta^3\cdot\prod_{j=1}^3\sqrt{\lambda^2+\lambda \zeta_j+\zeta_j^2}.
\end{equation}

\begin{remark}\label{dcd}
Since (\ref{equa}) is symmetric with respect to $\zeta_1$, $\zeta_2$ and $\zeta_3$, we have
$$\Lambda'_0(\alpha)=\Lambda'_0(\alpha+\frac{2\pi}{3})=\Lambda'_0(\alpha-\frac{2\pi}{3}).$$
Moreover, Theorem \ref{dc} shows that
$$\Lambda_0(\alpha)=\Lambda_0(\alpha+\frac{2\pi}{3})=\Lambda_0(\alpha-\frac{2\pi}{3}).$$
\end{remark}

\begin{theorem}\label{fan}
For each $\alpha\in\mathbb{R}$ we always have $\Lambda_0=\Lambda'_0$.
\end{theorem}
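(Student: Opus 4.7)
Our plan is to show both $\Lambda_0(\alpha)\le\Lambda'_0(\alpha)$ and $\Lambda_0(\alpha)\ge\Lambda'_0(\alpha)$ separately.

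The upper bound is essentially already in hand: Proposition \ref{ppt0} gives $\Lambda(\alpha,\bm{\delta})\le\Lambda'(\alpha,\bm{\delta})$ for every $\bm{\delta}\in\mathcal{Q}$, and Lemma \ref{ppt} gives monotonicity of $\Lambda'$, so $\Lambda'(\alpha,\bm{\delta})\le\Lambda'(\alpha,\bm{\Delta})=\Lambda'_0(\alpha)$ whenever $\bm{\delta}\le\bm{\Delta}$. The regime needing extra care is $\delta_3>\Delta$, which is allowed because Observation \ref{ob1} only bounds $\delta_3$ by $|a|/\sqrt{1+|a|^2}$; here Observation \ref{ob2} forces $\delta_2^2<2\Delta^2-\delta_3^2$, and substituting this trade-off into the polynomial (\ref{det}) evaluated at $\lambda=\Lambda'_0(\alpha)$ shows the expression stays nonpositive, so $\Lambda'(\alpha,\bm{\delta})\le\Lambda'_0(\alpha)$ in this regime too. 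Taking the sup over both pieces of the decomposition of $\Lambda_0(\alpha)$ yields $\Lambda_0\le\Lambda'_0$.

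For the reverse inequality, set $\lambda_0=\Lambda'_0(\alpha)$, so that $\det M(\lambda_0,\bm{\Phi}(\lambda_0))=0$ at $\bm{\delta}=\bm{\Delta}$. A short calculation from the explicit expressions for $\sin\Phi_k,\cos\Phi_k$ given just before (\ref{det}) shows that each off-diagonal entry of $M(\lambda_0,\bm{\Phi})$ at $\bm{\delta}=\bm{\Delta}$ equals $-\sqrt{\lambda_0^2+\lambda_0\zeta_k+\zeta_k^2}\,\Delta$ and is strictly negative, while each diagonal entry $\lambda_0-\zeta_k$ is nonnegative by the Lemma preceding Corollary \ref{po1}. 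Thus $M(\lambda_0,\bm{\Phi})$ at $\bm{\delta}=\bm{\Delta}$ is a symmetric, irreducible, singular M-matrix, and by the Perron--Frobenius theorem for Stieltjes matrices its kernel is spanned by a vector $\bm{x}_0=(x_1,x_2,x_3)$ with every $x_k>0$.

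Next, invoke Observation \ref{ob3} with phase parameters chosen so that the phases of the resulting normalized inner products $\langle f_{k_1}^{(\epsilon)},f_{k_2}^{(\epsilon)}\rangle/(\|f_{k_1}^{(\epsilon)}\|\|f_{k_2}^{(\epsilon)}\|)$ match $\Phi_{k_3}(\lambda_0)$, absorbing the leading minus sign supplied by Observation \ref{ob3} as an extra $\pi$ in the choice of $\bm{\theta}$. After rescaling so that $\|f_k^{(\epsilon)}\|=x_k$, set $f^{(\epsilon)}=\sum_k f_k^{(\epsilon)}$. The identity
\[
\bm{x}_0\,M(\lambda_0,\bm{\theta}^{(\epsilon)})\,\bm{x}_0^T=\lambda_0\|f^{(\epsilon)}\|^2-\mathrm{Re}\bigl(e^{-i\alpha}\langle C_\varphi^* f^{(\epsilon)},f^{(\epsilon)}\rangle\bigr)
\]
from the proof of Proposition \ref{ppt0} holds at the parameter value $\bm{\delta}=\bm{\delta}^{(\epsilon)}$, and by continuity in $(\bm{\delta},\bm{\theta})$ its left-hand side tends to $\bm{x}_0M(\lambda_0,\bm{\Phi})\bm{x}_0^T=0$ (at $\bm{\delta}=\bm{\Delta}$) as $\epsilon\to 0$. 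Since the $f_k^{(\epsilon)}$ sit in distinct eigenspaces of $C_\varphi^*$ and their norms are pinned at $x_k>0$, the norms $\|f^{(\epsilon)}\|$ remain bounded below, and dividing through gives $\mathrm{Re}(e^{-i\alpha}\langle C_\varphi^* f^{(\epsilon)}/\|f^{(\epsilon)}\|,f^{(\epsilon)}/\|f^{(\epsilon)}\|\rangle)\to\lambda_0$, so $\Lambda_0(\alpha)\ge\lambda_0$. The main technical hurdles are the positivity of $\bm{x}_0$ (via symmetric M-matrix/Perron--Frobenius theory), the delicate phase-matching against Observation \ref{ob3}, and the uniform lower bound on $\|f^{(\epsilon)}\|$ along the approximation sequence.
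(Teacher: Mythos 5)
Your first direction contains a genuine logical error in the regime $\delta_3>\Delta$. You evaluate the polynomial (\ref{det}) (with the actual parameters $\bm{\delta}$) at $\lambda=\Lambda'_0(\alpha)$, claim it is nonpositive, and conclude $\Lambda'(\alpha,\bm{\delta})\leqslant\Lambda'_0(\alpha)$. The inference runs the wrong way: since (\ref{det}) is strictly positive for all $\lambda$ larger than its largest root $\Lambda'(\alpha,\bm{\delta})$ (this is the content of the lemma preceding Corollary \ref{po1}), nonpositivity of (\ref{det}) at the single point $\lambda=\Lambda'_0(\alpha)$ would force $\Lambda'_0(\alpha)\leqslant\Lambda'(\alpha,\bm{\delta})$ --- the \emph{opposite} of what you need. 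To get the desired bound you must either show (\ref{det}) is strictly positive on all of $[\Lambda'_0(\alpha),\infty)$, or do what the paper does: evaluate the $\bm{\Delta}$-polynomial (\ref{equa}) at the point $\lambda=\Lambda'(\alpha,\bm{\delta})$, use Observations \ref{ob1} and \ref{ob2} to show it is negative there, and conclude $\Lambda'(\alpha,\bm{\delta})<\Lambda'_0(\alpha)$ because $\Lambda'_0(\alpha)$ is the largest root of (\ref{equa}). (The paper also normalizes $\zeta_2\leqslant\zeta_3$ via Remark \ref{dcd} before making this comparison, since $\delta_3$ plays a distinguished role in the decomposition of $\Lambda_0$; your write-up omits this.) As written, this half of your argument does not establish $\Lambda_0\leqslant\Lambda'_0$.

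Your second direction is sound but takes a genuinely different route from the paper. You extract a strictly positive kernel vector $\bm{x}_0$ of the singular positive semidefinite matrix $M(\Lambda'_0,\bm{\Phi})$ at $\bm{\delta}=\bm{\Delta}$ via Perron--Frobenius for irreducible Stieltjes matrices, then build near-maximizing vectors with norms $x_k$ and phases matched to $\bm{\Phi}$ using Observation \ref{ob3}. The paper avoids the M-matrix machinery entirely: since the construction in Observation \ref{ob3} realizes \emph{every} phase triple $\bm{\theta}$ for a single $\bm{\delta}$ close to $\bm{\Delta}$, the identity (\ref{repp}) forces $M(\lambda,\bm{\theta})$ to be positive definite for all $\bm{\theta}$ whenever $\lambda>\Lambda(\alpha,\bm{\delta})$, which gives $\Lambda'(\alpha,\bm{\delta})=\Lambda(\alpha,\bm{\delta})\leqslant\Lambda_0(\alpha)$ directly, and then one lets $\bm{\delta}\to\bm{\Delta}$. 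Your version needs two extra facts you only sketch --- strict negativity and irreducibility of the off-diagonal (which does hold, since $a\ne 0$ gives $\Delta>0$ and $\Lambda'_0\geqslant\max_k\zeta_k\geqslant 1/2$), and a uniform positive lower bound on $\|f^{(\epsilon)}\|^2$ (which follows from $\delta_k\leqslant\Delta<1/2$ in (\ref{*2}), not merely from the $f_k$ lying in distinct eigenspaces). With those details supplied it works, but the paper's argument is shorter and reuses Proposition \ref{ppt0} instead of spectral theory of nonnegative matrices.
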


\begin{proof}
According to Remark \ref{dcd}, without loss of generality we may assume that $\zeta_2\leqslant\zeta_3$. Then we can assert that $\Lambda'(\alpha,\bm{\delta})<\Lambda'_0(\alpha)$ for all $\bm{\delta}\in\mathcal{Q}$. Indeed, if $\bm{\delta}<\bm{\Delta}$, then this claim follows directly from Lemma \ref{ppt}. Otherwise, if $\delta_3>\Delta$, then by Observations \ref{ob1} and \ref{ob2} one can easily check that (\ref{equa}) is negative at $\lambda=\Lambda'(\alpha,\bm{\delta})$. Therefore by the definition of $\Lambda'$ we have
$$\Lambda'(\alpha,\bm{\delta})<\Lambda'(\alpha,\bm{\Delta})=\Lambda'_0(\alpha).$$
Now using Proposition \ref{ppt0}, we get $\Lambda(\alpha,\bm{\delta})<\Lambda'_0(\alpha)$ for all $\bm{\delta}\in\mathcal{Q}$. So $\Lambda_0(\alpha)\leqslant\Lambda'_0(\alpha)$.

One the other hand, by the proof of Observation \ref{ob3}, for arbitrary $\epsilon>0$ and $\bm{\theta}=(\theta_1,\theta_2,\theta_3)\in\mathbb{R}^3$, one can always find $\bm{\delta}\in\mathcal{Q}$ satisfying $|\delta_k-\Delta|<\epsilon$ and unit vectors $f_k\in\mathrm{Ker}(C_\varphi^*-\mu^{k-1})$ such that
\begin{align*}
\left(\langle f_2,f_3\rangle,\langle f_3,f_1\rangle,\langle f_1,f_2\rangle\right)=(\delta_1e^{i\theta_1},\delta_2e^{i\theta_2},\delta_3e^{i\theta_3}).
\end{align*}
For $\bm{x}=(x_1,x_2,x_3)\in\mathbb{R}^3$, let $f=\sum_{k=1}^{3}x_kf_k$. Then $f\in\mathcal{H}_{\bm{\delta}}$. Again by (\ref{*1}) and (\ref{*2}) we have
\begin{equation}\label{repp}
\bm{x}M(\lambda,\bm{\bm{\theta}})\bm{x}^T=\lambda ||f||^2-\mathrm{Re}(e^{-i\alpha}\langle C_\varphi^*f,f\rangle).
\end{equation}
For any $\lambda>\Lambda(\alpha,\bm{\delta})$, the right side of (\ref{repp}) is positive. This means that $M(\lambda,\bm{\theta})$ is positive definite for any $\lambda>\Lambda(\alpha,\bm{\delta})$. Therefore, by the definition of $\Lambda'$ and Proposition \ref{ppt0} we have $$\Lambda'(\alpha,\bm{\delta})=\Lambda(\alpha,\bm{\delta})\leqslant\Lambda_0(\alpha).$$
By letting  $\epsilon$ go to $0$, we get $\Lambda'_0\leqslant\Lambda_0$.
\end{proof}

Theorem \ref{fan} tells us that $\lambda=\Lambda_0$ is actually the largest root of the equation
\begin{equation}\label{eqc}
\prod_{j=1}^3(\lambda-\zeta_j)-\Delta^2\cdot\sum_{j=1}^3(\lambda^3-\zeta_j^3)-2\Delta^3\cdot\prod_{j=1}^3\sqrt{\lambda^2+\lambda \zeta_j+\zeta_j^2}=0.
\end{equation}
Since $\zeta_1$, $\zeta_2$ and $\zeta_3$ are the roots of the equation $4\zeta^3-3\zeta-\cos{3\alpha}=0,$ by noticing that (\ref{eqc}) is symmetric with respect to $\zeta_1$, $\zeta_2$ and $\zeta_3$, it can be written as
\begin{align*}
\lambda^3-\frac{3\lambda}{4}-&\frac{1}{4}\cos{3\alpha}-\left(3\lambda^3-\frac{3}{4}\cos3\alpha\right)\cdot\Delta^2=\\
&2\sqrt{\lambda^6+\frac{3\lambda^4}{4}-\frac{\lambda^3}{2}\cos3\alpha+\frac{9\lambda^2}{16}-\frac{3\lambda}{16}\cos3\alpha+\frac{1}{16}\cos^23\alpha}\cdot\Delta^3.
\end{align*}
By squaring both sides of this equation, we get quadratic equation with respect to $\cos3\alpha$. And then we have
\begin{equation*}
\frac{1}{4}\cos3\alpha=\Lambda_0^3-L\Lambda_0,
\end{equation*}
or
\begin{equation}\label{eq}
\cos^3\alpha-\frac{3}{4}\cos\alpha=\Lambda_0^3-L\Lambda_0,
\end{equation}
where
$$L=\frac{3+6\Delta^3\sqrt{3-3\Delta^2}-6\Delta^4-6\Delta^2}{4(1-\Delta^2)(1-4\Delta^2)}.$$
Notice that $L$ is a constant determined only by the fixed point of $\varphi$. Remember that all support lines of $W(C_\varphi^*)$ are given by
$$\cos\alpha\cdot x+\sin\alpha\cdot y-\Lambda_0(\alpha)=0,$$
so (\ref{eq}) implies that, on the dual plane with homogeneous coordinate $(u,v,w)$, the support lines of $W(C_\varphi^*)$ lie on the curve
\begin{equation}\label{curve}
u^3-3uv^2-4Lu^2w-4Lv^2w+4w^3=0.
\end{equation}

It's a cubic. Thus each support line of $W(C_\varphi^*)$ is actually tangent to the boundary of $W(C_\varphi^*)$, and  $\overline{W(C_\varphi^*)}$ is the convex hull of a curve whose tangential equation is (\ref{curve}). So things need to be done is to figure out the dual curve of (\ref{curve}).
Suppose the equation of the dual curve of (\ref{curve}) is $\Gamma(x,y)=0$ in Euclidean coordinates. One can check that (\ref{curve}) has no singular point, so by the Pl\"{u}cker Formula, $\Gamma$ is of degree $6$.

First of all, the symmetry of (\ref{curve}) implies that $\Gamma$ is a linear combination of
$$\left\{(x^2+y^2)^\nu(x^3-3xy^2)^\tau\right\}_{2\nu+3\tau\leqslant6}.$$
So $\Gamma(x,0)$ is a polynomial of degree $6$ without linear term. It is easy to check that $L>\frac{3}{4}$, and then some more calculations show that one can draw four distinct real tangents of (\ref{curve}) through the point $(0,1,0)$. This means that $\Gamma(x,y)=0$ has four distinct points of intersection with the real axis, whose horizontal ordinates are $\frac{3}{4L}$ and the three roots of the equation $x^3-Lx-\frac{1}{4}=0$. Moreover, since the line $3u+4Lw=0$ is an inflexional tangent to (\ref{curve}), the point $(\frac{3}{4L},0)$ must be a cusp of $\Gamma(x,y)=0$. Therefore, by taking $\Gamma(x,0)$ monic, we have
\begin{align*}
\Gamma(x,0)=&(x-\frac{3}{4L})^3(x^3-Lx-\frac{1}{4})\\
=&x^6-\frac{9}{4L}x^5+(\frac{27}{16L^2}-L)x^4+(2-\frac{27}{64L^3})x^3-\frac{9}{8L}x^2+\frac{27}{256L^3}.
\end{align*}
So
\begin{align*}
\Gamma(x,&y)=P(x^2+y^2)^3+Q(x^3-3xy^2)^2-\frac{9}{4L}(x^2+y^2)(x^3-3xy^2)\\
&+(\frac{27}{16L^2}-L)(x^2+y^2)^2+(2-\frac{27}{64L^3})(x^3-3xy^2)-\frac{9}{8L}(x^2+y^2)+\frac{27}{256L^3},
\end{align*}
where $P+Q=1$.
Finally, by noticing that $(\frac{3}{8L},\frac{1}{\sqrt{L}})$ lies on $\Gamma=0$, we see that
$P=1-\frac{27}{64L^3}$ and $Q=\frac{27}{64L^3}$. So the closure of $W(C_\varphi^*)$ is the convex hull of the curve

\begin{align}\label{curve*}
(1&-\frac{27}{64L^3})(x^2+y^2)^3+\frac{27}{64L^3}(x^3-3xy^2)^2\\
&-\frac{9}{4L}(x^2+y^2)(x^3-3xy^2)+(\frac{27}{16L^2}-L)(x^2+y^2)^2\nonumber\\
&+(2-\frac{27}{64L^3})(x^3-3xy^2)-\frac{9}{8L}(x^2+y^2)+\frac{27}{256L^3}=0.\nonumber
\end{align}

The shape of (\ref{curve}) and (\ref{curve*}) are illuminated in Figure 1 and Figure 2 respectively.

\begin{figure}[htbp]
\begin{minipage}[t]{0.5\linewidth}
\centering
\includegraphics[height=4.5cm]{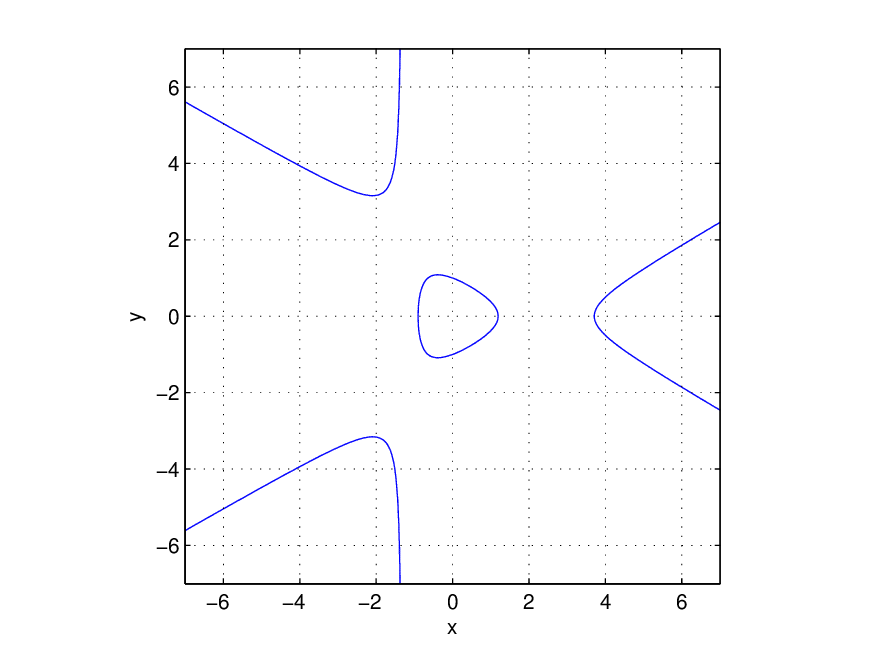}
\caption{Curve (\ref{curve}) when $L=1$.}
\end{minipage}%
\hfill
\begin{minipage}[t]{0.5\linewidth}
\centering
\includegraphics[height=4.5cm]{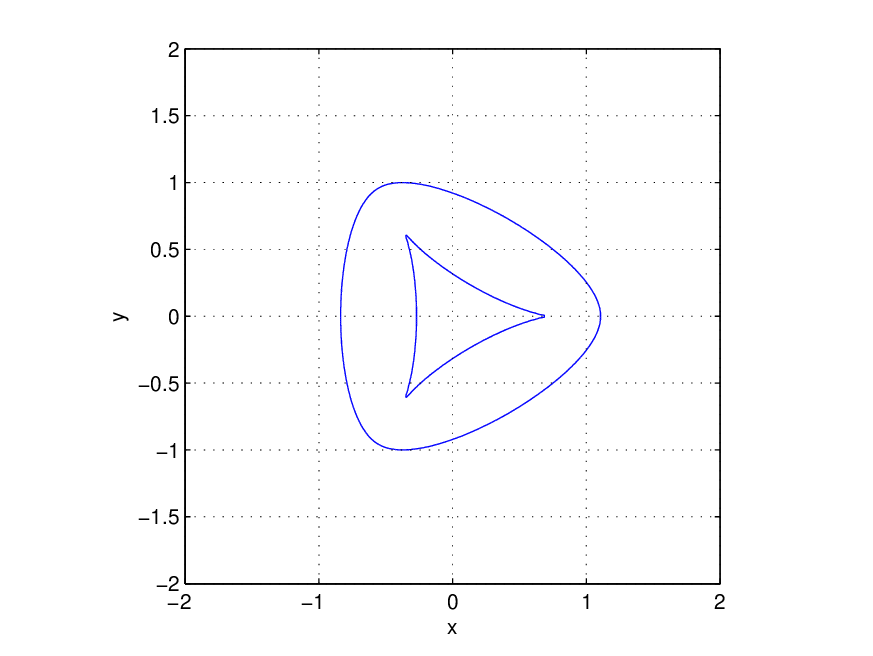}
\caption{Curve (\ref{curve*}) when $L=1$.}
\end{minipage}
\end{figure}

Finally, we show that $W(C_\varphi^*)$ contains no boundary point.

\begin{lemma}
$W(C_\varphi^*)$ is an open set.
\end{lemma}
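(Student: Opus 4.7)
The plan is to leverage the strict inequalities that are already buried in the proof of Theorem~\ref{fan} to show that every point of $W(C_\varphi^*)$ lies strictly inside every support line of $\overline{W(C_\varphi^*)}$, and therefore in the interior.

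Concretely, I would start with an arbitrary $w = \langle C_\varphi^* f, f\rangle \in W(C_\varphi^*)$ with $\|f\|=1$ and write $f = f_1 + f_2 + f_3$ using the spectral decomposition of $C_\varphi^*$. To this $f$ I attach the triple
\[
\bm{\delta}(f) = \left(\tfrac{|\langle f_2,f_3\rangle|}{\|f_2\|\|f_3\|},\ \tfrac{|\langle f_3,f_1\rangle|}{\|f_3\|\|f_1\|},\ \tfrac{|\langle f_1,f_2\rangle|}{\|f_1\|\|f_2\|}\right),
\]
interpreting any undefined ratio (when one of the $f_k$ vanishes) as $0$. With this convention $f \in \mathcal{H}_{\bm{\delta}(f)}$.

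The next step is to observe that $\bm{\delta}(f) < \bm{\Delta}$ in the partial order from Section~3.2. If all three $f_k$ are nonzero this is exactly Observation~\ref{ob4} (which forbids simultaneous equality), together with Observation~\ref{ob1} (which gives $\bm{\delta}(f)\leqslant\bm{\Delta}$). If some $f_k$ vanishes then the corresponding component of $\bm{\delta}(f)$ is $0<\Delta$, so again $\bm{\delta}(f)<\bm{\Delta}$. Combining Proposition~\ref{ppt0}, Lemma~\ref{ppt}, and Theorem~\ref{fan}, for every $\alpha\in\mathbb{R}$ we get
\[
\mathrm{Re}(e^{-i\alpha}w)\leqslant \Lambda(\alpha,\bm{\delta}(f))\leqslant \Lambda'(\alpha,\bm{\delta}(f)) < \Lambda'(\alpha,\bm{\Delta}) = \Lambda'_0(\alpha) = \Lambda_0(\alpha).
\]
Thus $w$ lies strictly below every support line of $\overline{W(C_\varphi^*)}$, so $w$ is not a boundary point of $\overline{W(C_\varphi^*)}$; since $w\in\overline{W(C_\varphi^*)}$, this forces $w\in\mathrm{int}\,\overline{W(C_\varphi^*)}$. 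Because $W(C_\varphi^*)$ is a convex set with nonempty interior (its closure is the bounded convex hull of the non-degenerate curve~(\ref{curve*})), $\mathrm{int}\,\overline{W(C_\varphi^*)} = \mathrm{int}\,W(C_\varphi^*)$, and we conclude $w\in\mathrm{int}\,W(C_\varphi^*)$.

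The only delicate point is the degenerate case where some $f_k$ vanishes, which falls outside the definition of $\mathcal{Q}$: one must check that the arguments of Proposition~\ref{ppt0} and Lemma~\ref{ppt} are still valid for these degenerate $\bm{\delta}$. This is straightforward, since Proposition~\ref{ppt0} is proved by a pointwise matrix inequality that does not actually require $\bm{\delta}\in\mathcal{Q}$, and Lemma~\ref{ppt} uses only the monotonicity of the defining polynomial~(\ref{det}) in the $\delta_j$'s. Once that is confirmed, the openness follows by the uniform strict bound above.
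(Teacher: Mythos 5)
Your overall strategy is the same as the paper's: show that for every $\alpha$ the supremum $\Lambda_0(\alpha)$ is never attained by any $f$, so no point of $W(C_\varphi^*)$ lies on a support line of $\overline{W(C_\varphi^*)}$, and then use convexity to conclude openness. However, there is a genuine gap in the step where you claim $\bm{\delta}(f)<\bm{\Delta}$ for every $f$. You attribute the bound $\bm{\delta}(f)\leqslant\bm{\Delta}$ to Observation~\ref{ob1}, but that observation only bounds the first two components by $\Delta=\frac{|a|}{1+|a|^2}$; the third component is bounded by $\frac{|a|}{\sqrt{1+|a|^2}}$, which is \emph{strictly larger} than $\Delta$. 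The value $\delta_3>\Delta$ is genuinely attainable (e.g.\ take $\alpha_0=\beta_0=1$ and all other coefficients zero, which gives $\delta_3=\frac{|a|}{\sqrt{1+|a|^2}}$), so $\bm{\delta}(f)$ need not be comparable to $\bm{\Delta}$ in the partial order at all, and Lemma~\ref{ppt} cannot be applied to get $\Lambda'(\alpha,\bm{\delta}(f))<\Lambda'(\alpha,\bm{\Delta})$. This is precisely why the proof of Theorem~\ref{fan} splits into the two cases $\bm{\delta}<\bm{\Delta}$ and $\delta_3>\Delta$, handling the second case by a separate argument based on Observation~\ref{ob2} (showing that~(\ref{equa}) is negative at $\lambda=\Lambda'(\alpha,\bm{\delta})$).

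The repair is straightforward: instead of rederiving the strict inequality through Lemma~\ref{ppt} alone, invoke the assertion established in the proof of Theorem~\ref{fan} that $\Lambda'(\alpha,\bm{\delta})<\Lambda'_0(\alpha)$ for \emph{all} $\bm{\delta}\in\mathcal{Q}$ (hence $\Lambda(\alpha,\bm{\delta})<\Lambda_0(\alpha)$ by Proposition~\ref{ppt0} and Theorem~\ref{fan}); this covers the $\delta_3>\Delta$ case and is in substance what the paper's own one-line proof does by citing Observation~\ref{ob4}. Your treatment of the degenerate situation where some $f_k=0$, and the final convexity argument $\mathrm{int}\,\overline{W}=\mathrm{int}\,W$, are fine and in fact spell out details the paper leaves implicit.
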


\begin{proof}
According to Observation \ref{ob4}, for each $\alpha\in\mathbb{R}$, one can never find $f\in H^2(D)$ such that $\mathrm{Re}(e^{-i\alpha}\langle C_\varphi^*f,f\rangle)=\Lambda'_0=\Lambda_0$. So $W(C_\varphi^*)$ contains no boundary point.
\end{proof}

Now, as a conclusion of this section, we can prove our first main result, which gives a precise description of the numerical ranges of composition operators induced by elliptic automorphisms of order $3$.
\\

\begin{proof2}
Note that (\ref{curve*}) is symmetric with respect to the real axis, so the numerical ranges of $C_\varphi$ and $C_\varphi^*$ are exactly the same.

Now we check out the real foci of (\ref{curve*}). For $k=0,1,2$, the lines joining the circular points $(1,i,0)$ and $(\cos{2k\pi i/3},\sin{2k\pi i/3},1)$ lie on (\ref{curve}) on the dual plane. So do the lines joining the other circular points $(1,-i,0)$ and $(\cos{2k\pi i/3},\sin{2k\pi i/3},1)$. Therefore, the real foci of (\ref{curve*}) are $(\cos{2k\pi i/3},\sin{2k\pi i/3},1)$ for  $k=0,1,2$ in homogeneous coordinates.
\end{proof2}

\section{Elliptic Automorphisms of Order $2$}

In this section, we will turn to the elliptic automorphisms of order $2$. In papers \cite{BS} and \cite{Abd05}, it has been shown that if $\varphi$ is an elliptic automorphism of order $2$ with fixed point $a\in D$, then the closure of the numerical range of $C_\varphi$ on $H^2(D)$ is the ellipse with foci $\pm 1$ and semi-major axis $\frac{1+|a|^2}{1-|a|^2}$, see Theorem \ref{zc}. So what we concern here is that if any boundary point of this ellipse belongs to the numerical range of $C_\varphi$.

We want to mention here that the route we followed in the previous section is still available for figuring out $W(C_\varphi)$ when $\varphi$ is an elliptic automorphism of order $2$, only after a slight modification. In fact, the calculation of order $2$ cases is much simpler than what we have done for the order $3$ cases in the last section. However, since the shape of $\overline{W}(C_\varphi)$ has been given in \cite{BS} and \cite{Abd05}, we now adopt a more direct way to show that $W(C_\varphi)$ is actually an open set.

Similar to Lemma \ref{tzz}, the next lemma gives the eigenvector spaces of $C_\varphi^*$ when $\varphi$ is an elliptic automorphisms of order $2$.

\begin{lemma}
Suppose $\varphi$ is an elliptic automorphism of order $2$ with fixed point $a\in D$. Then for $k=0,1$, we have $$\mathrm{Ker}(C_\varphi^*-(-1)^k)=\overline{\mathrm{span}}\{e_{2j+k}-ae_{2j+k-1}; j=0,1,2,...\},$$
where $e_{-1}=0$ and $e_j=k_a\varphi_a^j$ for $j=0,1,2,...$
\end{lemma}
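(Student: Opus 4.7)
The statement is the $p=2$ specialization of the general eigenspace decomposition for $C_\varphi^*$ when $\varphi$ is an elliptic automorphism of finite order, proved as Lemma~2.2 of \cite{BN} or Corollary~2.6 of \cite{MY}, and already invoked by the authors for Lemma \ref{tzz}. The shortest proof is therefore to cite the same reference at $p=2$, observing that $\overline{\varphi'(a)}^k=(-1)^k$. For a self-contained argument I would proceed as follows.

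Since $\varphi\circ\varphi=\mathrm{id}$, both $C_\varphi$ and $C_\varphi^*$ are involutions on $H^2(D)$, so $H^2(D)$ decomposes orthogonally as $\mathrm{Ker}(C_\varphi^*-I)\oplus\mathrm{Ker}(C_\varphi^*+I)$. The identity $\varphi_a\circ\varphi\circ\varphi_a=-\mathrm{id}$ (forced by $\varphi$ being an order-$2$ elliptic automorphism fixing $a$) yields $C_\varphi\varphi_a^j=(-1)^j\varphi_a^j$, whence $\mathrm{Ker}(C_\varphi-(-1)^k)=\overline{\mathrm{span}}\{\varphi_a^{2j+k}\}_{j\geqslant 0}$. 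Since for any involution the $\lambda$-eigenspace of the adjoint is the orthogonal complement of the $-\lambda$-eigenspace of the operator, the plan is to verify that $\overline{\mathrm{span}}\{e_{2j+k}-a e_{2j+k-1}\}_{j\geqslant 0}$ equals the orthogonal complement of $\overline{\mathrm{span}}\{\varphi_a^{2j+1-k}\}_{j\geqslant 0}$, and then to check that the whole collection $\{e_{2j+k}-a e_{2j+k-1}\}_{k=0,1,\,j\geqslant 0}$ spans $H^2(D)$.

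The orthogonality reduces to showing $\langle e_p-a e_{p-1},\varphi_a^q\rangle=0$ whenever $p\ne q$. Using the closed form $e_p-a e_{p-1}=-z(1-|a|^2)^{3/2}(a-z)^{p-1}/(1-\bar a z)^{p+1}$ and the boundary formula $\overline{\varphi_a^q}=(1-\bar a z)^q/(a-z)^q$ on the unit circle, one computes the inner product as a contour integral: for $q<p$ the integrand is analytic in $D$ and vanishes at $z=0$ because of the leading factor of $z$, while for $q>p$ the residue at $z=a$ vanishes because the polynomial $(1-\bar a z)^{q-p-1}$ has degree too low to survive the $(q-p)$-th derivative demanded by the pole of order $q-p+1$. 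The spanning follows from the telescoping identity $e_p=(e_p-a e_{p-1})+a e_{p-1}$ and induction on $p$, using that $e_0$ is itself the $(j,k)=(0,0)$ vector. The main obstacle is the residue calculation, which is routine but a bit fiddly; the appeal of citing \cite{BN, MY} is that the decomposition is derived uniformly for all finite orders via a unitary-equivalence argument, bypassing the case-by-case computation.
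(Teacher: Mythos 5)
Your first route---specializing Lemma~2.2 of \cite{BN} (or Corollary~2.6 of \cite{MY}) to $p=2$---is exactly what the paper does: it states this lemma with no proof beyond the remark that it is ``similar to Lemma \ref{tzz}'', which is itself justified only by that citation. Your self-contained argument is a genuinely different, elementary route, and it is essentially sound: the contour-integral computation does give $\langle e_p-ae_{p-1},\varphi_a^q\rangle=0$ for $p\ne q$ (analyticity when $q<p$, a vanishing residue at $z=a$ when $q>p$), the identity $\varphi_a^j\circ\varphi=(-1)^j\varphi_a^j$ correctly identifies the eigenspaces of $C_\varphi$ itself, and the duality $\mathrm{Ker}(C_\varphi^*-\lambda)=\mathrm{Ran}(C_\varphi-\lambda)^\perp$ together with the telescoping/spanning step closes the argument. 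Two small corrections are needed. First, the decomposition $H^2(D)=\mathrm{Ker}(C_\varphi^*-I)\oplus\mathrm{Ker}(C_\varphi^*+I)$ is a direct sum but is \emph{not} orthogonal: $C_\varphi$ is not normal, and indeed Lemma \ref{lll} of the paper quantifies exactly how non-orthogonal these two eigenspaces are (if they were orthogonal, $W(C_\varphi)$ would be the segment $[-1,1]$ rather than an ellipse). This slip is harmless to your logic, since you only use the algebraic splitting via the bounded idempotent $(I\pm C_\varphi^*)/2$ and the adjoint-range duality, not orthogonality of the two summands. Second, your closed form $e_p-ae_{p-1}=-z(1-|a|^2)^{3/2}(a-z)^{p-1}/(1-\bar az)^{p+1}$ is valid only for $p\geqslant 1$; the vector $e_0-ae_{-1}=e_0=k_a$ must be handled separately, which is immediate since $\langle\varphi_a^q,k_a\rangle=\sqrt{1-|a|^2}\,\varphi_a(a)^q=0$ for $q\geqslant 1$. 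With those two patches the proof is complete, and it has the virtue of being self-contained where the paper simply defers to the literature.
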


\begin{lemma}\label{lll}
Suppose $\varphi$ is an elliptic automorphism of order $2$ with fixed point $a\in D\backslash\{0\}$. For non-zero vectors $f_k\in\mathrm{Ker}(C_\varphi^*-(-1)^{k-1})$, $k=1,2$ we have
$$\frac{\langle f_1,f_2\rangle}{||f_1||||f_2||}<\frac{2|a|}{1+|a|^2}.$$
\end{lemma}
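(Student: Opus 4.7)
The plan is to expand $f_1$ and $f_2$ in the Guyker basis using the preceding lemma, compute $\langle f_1,f_2\rangle$ explicitly, and then extract the bound via two successive applications of the Cauchy--Schwarz inequality while arguing that equality cannot be attained. Writing
$$f_1 = ||f_1||\left(\alpha_0 e_0 + \sum_{j\geqslant 1}\alpha_j\frac{e_{2j}-ae_{2j-1}}{\sqrt{1+|a|^2}}\right),\quad f_2 = ||f_2||\sum_{k\geqslant 0}\beta_k\frac{e_{2k+1}-ae_{2k}}{\sqrt{1+|a|^2}},$$
with $\sum_{j\geqslant 0}|\alpha_j|^2 = \sum_{k\geqslant 0}|\beta_k|^2 = 1$, a direct calculation using the orthonormality of $\{e_j\}$ (entirely analogous to the one carried out for the order-$3$ case in Section~3) yields
$$\frac{\langle f_1,f_2\rangle}{||f_1||\cdot ||f_2||} = -\frac{\overline{a}\,\alpha_0\overline{\beta_0}}{\sqrt{1+|a|^2}} - \frac{1}{1+|a|^2}\sum_{j\geqslant 1}\alpha_j\bigl(\overline{a}\,\overline{\beta_j} + a\,\overline{\beta_{j-1}}\bigr).$$

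Setting $\xi_0 = \overline{a}\,\overline{\beta_0}/\sqrt{1+|a|^2}$ and $\xi_j = (\overline{a}\,\overline{\beta_j}+a\,\overline{\beta_{j-1}})/(1+|a|^2)$ for $j\geqslant 1$, so that the displayed expression equals $-\sum_{j\geqslant 0}\alpha_j\xi_j$, Cauchy--Schwarz gives $|\langle f_1,f_2\rangle|/(||f_1||\cdot ||f_2||)\leqslant(\sum|\xi_j|^2)^{1/2}$. Expanding $|\overline{a}\,\overline{\beta_j}+a\,\overline{\beta_{j-1}}|^2$ produces a diagonal piece $|a|^2(|\beta_j|^2+|\beta_{j-1}|^2)$, which telescopes across $j\geqslant 1$ to $|a|^2(2-|\beta_0|^2)$, plus a cross term $2\mathrm{Re}\bigl(a^2\sum_{j\geqslant 1}\beta_j\overline{\beta_{j-1}}\bigr)$; a second Cauchy--Schwarz bounds $|\sum_{j\geqslant 1}\beta_j\overline{\beta_{j-1}}|\leqslant\sqrt{1-|\beta_0|^2}$. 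Collecting terms,
$$\sum_{j\geqslant 0}|\xi_j|^2 \leqslant \frac{|a|^2}{(1+|a|^2)^2}\Bigl(|a|^2|\beta_0|^2 + 2 + 2\sqrt{1-|\beta_0|^2}\Bigr).$$

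To conclude the strict inequality I would examine the real function $g(s)=|a|^2 s+2+2\sqrt{1-s}$ on $[0,1]$. Since $|a|<1$, one has $g'(s)=|a|^2-1/\sqrt{1-s}<0$, so $g$ is strictly decreasing and attains its maximum $g(0)=4$ only at $s=0$. Thus if $|\beta_0|>0$, the displayed bound is already strictly less than $4|a|^2/(1+|a|^2)^2$. If $|\beta_0|=0$, then equality in the second Cauchy--Schwarz would require $(\beta_j)_{j\geqslant 1}$ and $(\beta_{j-1})_{j\geqslant 1}$ to be proportional in $\ell^2$, forcing $\beta_j=c\beta_{j-1}$ for some constant $c$ and all $j\geqslant 1$, which together with $\beta_0=0$ yields $\beta\equiv 0$, contradicting $\sum|\beta_k|^2=1$. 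In either case $\sum|\xi_j|^2<4|a|^2/(1+|a|^2)^2$, which gives the desired strict bound. The main obstacle lies in separating the two cases $|\beta_0|>0$ and $|\beta_0|=0$ and carrying the strictness cleanly through both Cauchy--Schwarz applications.
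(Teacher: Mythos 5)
Your proposal is correct and follows essentially the same route as the paper: both arguments reduce the claim to showing that $\frac{|a|^2|\beta_0|^2}{1+|a|^2}+\frac{1}{(1+|a|^2)^2}\sum_{j}\bigl|\overline{a}\beta_j+a\beta_{j+1}\bigr|^2<\frac{4|a|^2}{(1+|a|^2)^2}$, the paper phrasing your first Cauchy--Schwarz step as the norm of the projection of $f_2$ onto $\mathrm{Ker}(C_\varphi^*-1)$ and estimating the sum termwise via $|x+y|^2\leqslant 2|x|^2+2|y|^2$ rather than via your cross-term Cauchy--Schwarz. The only substantive difference is that your case analysis makes the strictness explicit (the paper leaves it to the reader to note that $\overline{a}\beta_j=a\beta_{j+1}$ cannot hold for every $j$ when $\sum|\beta_j|^2=1$), which is a welcome clarification rather than a deviation.
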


\begin{proof}

Let
$$f=\sum_{j=0}^{\infty}\beta_j\frac{e_{2j+1}-ae_{2j}}{\sqrt{1+|a|^2}}\in\mathrm{Ker}(C_\varphi^*+1);$$
here we assume $\sum_{j=0}^{\infty}|\beta_j|^2=1,$ so that $||f||=1$.

Then the square of the length of the projection of $f$ in $\mathrm{Ker}(C_\varphi^*-1)$ is
\begin{align*}
&\frac{|a|^2|\beta_0|^2}{1+|a|^2}+\frac{\sum_{j=0}^{\infty}\left|\overline{a}\beta_j+a\beta_{j+1}\right|^2}{(1+|a|^2)^2}\\
<&\frac{|a|^2|\beta_0|^2}{1+|a|^2}+\frac{2\sum_{j=1}^{\infty}|a|^2|\beta_j|^2}{(1+|a|^2)^2}+\frac{2\sum_{j=0}^{\infty}|a|^2|\beta_j|^2}{(1+|a|^2)^2}\\
\leqslant&\frac{4|a|^2}{(1+|a|^2)^2}.
\end{align*}

\end{proof}

Now we can give a proof to our Main Result 1.
\\

\begin{proof1}
By Theorem \ref{zc}, we only need to check that $W(C_\varphi^*)$ is contained in this open ellipse.
For each $f\in H^2(D)$ such that $||f||=1$, we can write $f=f_1+f_2$ where $f_k\in\mathrm{Ker}(C_\varphi^*+(-1)^{k-1})$ for $k=1,2$.
Then
\begin{align*}
\langle C_\varphi^*f,f\rangle=\langle f_1-f_2,f_1+f_2\rangle.
\end{align*}
So
\begin{align*}
1-\langle C_\varphi^*f,f\rangle=&\langle 2f_2,f_1+f_2\rangle\\
=&2||f_2||^2+2\langle f_2,f_1\rangle,
\end{align*}
and
\begin{align*}
1+\langle C_\varphi^*f,f\rangle=&\langle 2f_1,f_1+f_2\rangle\\
=&2||f_1||^2+2\langle f_1,f_2\rangle.
\end{align*}
Hence
\begin{align}\label{21}
\frac{1}{4}|1-\langle C_\varphi^*f,f\rangle|^2-\frac{1}{4}|1+\langle C_\varphi^*f,f\rangle|^2=&(||f_2||^2-||f_1||^2)||f||^2 \nonumber\\
=&||f_2||^2-||f_1||^2.
\end{align}
Suppose that $\langle f_1,f_2\rangle=\delta e^{i\theta}||f_1||\cdot||f_2||$ where $\delta>0$. Then we have
\begin{align*}
\frac{1}{4}|1-\langle C_\varphi^*f,f\rangle|^2=||f_2||^4+||f_1||^2||f_2||^2\delta^2+2||f_1||\cdot||f_2||^3\delta\cos\theta
\end{align*}
and
\begin{align*}
\frac{1}{4}|1+\langle C_\varphi^*f,f\rangle|^2=||f_1||^4+||f_1||^2||f_2||^2\delta^2+2||f_1||^3||f_2||\delta\cos\theta.
\end{align*}
Therefore,

\begin{align}\label{22}
\frac{|1-\langle C_\varphi^*f,f\rangle|-|1+\langle C_\varphi^*f,f\rangle|}{2}\geqslant(||f_2||^2-||f_1||^2)\sqrt{1-\delta^2}.
\end{align}
Combining (\ref{21}) and (\ref{22}) we get
$$\frac{|1-\langle C_\varphi^*f,f\rangle|+|1+\langle C_\varphi^*f,f\rangle|}{2}\leqslant\frac{1}{\sqrt{1-\delta^2}}.$$
Finally, by Lemma \ref{lll}, $\delta<\frac{2|a|}{1+|a|^2}$, so
$$\frac{|1-\langle C_\varphi^*f,f\rangle|+|1+\langle C_\varphi^*f,f\rangle|}{2}<\frac{1+|a|^2}{1-|a|^2}.$$
\end{proof1}

Since each quadratic curve is of class two, it is natural to make the following conjecture about the cases where the order of $\varphi$ is greater than or equal to $4$.

\newtheorem*{conjecture}{Conjecture}

\begin{conjecture}
Suppose $\varphi$ is an elliptic automorphism of finite order $p$ and the fixed point of $\varphi$ is not $0$. Then the numerical range of $C_\varphi$ on $H^2(D)$ is the interior of the convex hull of an algebraic curve of class $p$ and degree $p^2-p$. Moreover, the real foci of the curve are exactly the eigenvalues of $C_\varphi$ on $H^2(D)$, which are $\{e^{2k\pi i/p}\}_{k=1}^p$.
\end{conjecture}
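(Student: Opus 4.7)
The plan is to repeat, \emph{mutatis mutandis}, the program carried out for $p=3$ in Section~3. First I would decompose each unit $f\in H^2(D)$ as $f=\sum_{k=1}^{p} f_k$ with $f_k\in\mathrm{Ker}(C_\varphi^*-\mu^{k-1})$, where $\mu=\overline{\varphi'(a)}$ is now a primitive $p$-th root of unity. The analogue of Lemma~\ref{tzz} identifies each $f_k$ as a Guyker series in $\{e_{pj+k-1}-ae_{pj+k-2}\}_{j\ge 0}$; this lets one express every inner product $\langle f_i,f_j\rangle$ as a weighted $\ell^2$-pairing of the coefficient sequences, and hence to derive the $p$-variable analogues of Observations~\ref{ob1} and~\ref{ob3} (the latter now controlling the $\binom{p}{2}$ phases $\theta_{ij}$ simultaneously). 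In particular $\Delta=|a|/(1+|a|^2)$ should again be the sharp upper bound on every normalized $|\langle f_i,f_j\rangle|$ except for one distinguished ``consecutive'' pair governed by the larger bound $|a|/\sqrt{1+|a|^2}$.

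Next I would parametrize $W(C_\varphi^*)$ by $\bm{\delta}\in\mathbb{R}^{\binom{p}{2}}$ and, imitating (\ref{*1})--(\ref{*3}), rewrite $\lambda\|f\|^2-\mathrm{Re}(e^{-i\alpha}\langle C_\varphi^*f,f\rangle)$ as a quadratic form $\bm{x}M(\lambda,\bm{\phi})\bm{x}^T$ in $\bm{x}=(\|f_k\|)_{k=1}^p$, where $M$ is a $p\times p$ real symmetric matrix whose off-diagonal $(i,j)$-entry depends on $\delta_{ij}$, on $\phi_{ij}$, and on the roots $\zeta_1,\dots,\zeta_p$ of $T_p(\zeta)-\cos p\alpha=0$. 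Minimizing over $\bm{\phi}$ (the gradient condition is a $\pi/p$-shifted cosine identity) should express $\det M(\lambda,\bm{\Phi})$ as an explicit polynomial in $\lambda$ with radical coefficients $\sqrt{\lambda^2+\lambda\zeta_j+\zeta_j^2}$. Calling its largest positive root $\Lambda'(\alpha,\bm{\delta})$, the analogues of Proposition~\ref{ppt0} and Lemma~\ref{ppt} (monotonicity in $\bm{\delta}$), together with the achievability supplied by the $p$-variable Observation~\ref{ob3}, would then give $\Lambda_0(\alpha)=\Lambda'(\alpha,\bm{\Delta})$, which is the support function of $\overline{W(C_\varphi^*)}$.

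The resulting identity, after clearing radicals, yields a polynomial relation between $\cos p\alpha$ and $\Lambda_0$. Transferring this to dual homogeneous coordinates gives the tangential equation of $\partial W(C_\varphi^*)$, which should be of degree $p$ in $(u,v,w)$; the Pl\"ucker formula, applied after checking that the singularity pattern generalizes the $p=3$ case (cusps at the inflexional tangents in the directions of the eigenvalues, plus the predictable nodes forced by the $\mathbb{Z}/p$-symmetry), then yields a dual curve of degree $p^2-p$. Openness of $W(C_\varphi)$ follows from the strict inequality in the analogue of Observation~\ref{ob4}, and the real foci are identified exactly as at the end of Section~3: each line joining an eigenvalue $e^{2k\pi i/p}$ to a circular point $(1,\pm i,0)$ must lie on the tangential curve by virtue of the $\mathbb{Z}/p$-symmetry of the defining equation.

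The hard part will be the combinatorics of the Guyker inner products for large $p$. For $p=3$ there are only three normalized cross-pairings, a single auxiliary Cauchy--Schwarz inequality (Observation~\ref{ob2}) suffices to rule out simultaneous saturation, and the supremum in $\Lambda_0$ is attained at the diagonal $\bm{\Delta}$. For general $p$, however, the Guyker expansions of different eigenspaces share common coefficients, so the $\binom{p}{2}$ entries of $\bm{\delta}$ satisfy a web of Cauchy--Schwarz constraints; verifying that $\bm{\Delta}$ still maximizes $\Lambda'$, and characterizing the admissible set $\mathcal{Q}\subset\mathbb{R}^{\binom{p}{2}}$, is the principal analytic obstacle. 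A secondary, purely algebraic difficulty is that eliminating the $p$ nested radicals may introduce spurious factors, so identifying and discarding them--and checking that the surviving irreducible factor has exactly the singularities needed for Pl\"ucker to return degree $p^2-p$--will likely require a case analysis beyond a hand computation for arbitrary $p$.
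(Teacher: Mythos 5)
This statement is left as an open conjecture in the paper; Sections 3 and 4 establish it only for $p=3$ and $p=2$, and the authors offer no argument for $p\geqslant 4$. Your proposal is therefore a research program rather than a proof, and you candidly flag its two unproven steps (that $\bm{\Delta}$ still maximizes $\Lambda'$, and that the dual curve has exactly the singularities Pl\"ucker needs). Those are genuine gaps, not routine verifications, so the proposal does not establish the statement.

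Beyond that, there is a concrete structural error in your setup. The eigenspace $\mathrm{Ker}(C_\varphi^*-\mu^{k-1})$ is spanned by $\{e_{pj+k-1}-ae_{pj+k-2}\}_{j\geqslant 0}$, so the Guyker indices occurring in the $k$-th eigenspace are exactly those congruent to $k-1$ or $k-2$ modulo $p$. Two eigenspaces therefore have nonzero inner product only when their indices are cyclically adjacent. For $p=3$ every pair is adjacent, which is why all three correlations $\delta_1,\delta_2,\delta_3$ can approach $\Delta$ simultaneously and the extremal point of $\mathcal{Q}$ is the diagonal $\bm{\Delta}$. For $p\geqslant 4$, however, non-adjacent eigenspaces are \emph{orthogonal}: for instance when $p=4$ one has $\langle f_1,f_3\rangle=\langle f_2,f_4\rangle=0$ identically. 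So your claim that all $\binom{p}{2}$ normalized pairings are governed by the sharp bound $|a|/(1+|a|^2)$ is false; most entries of $\bm{\delta}$ vanish, the matrix $M$ is cyclically tridiagonal rather than full, and the candidate extremal tuple is not the all-$\Delta$ vector. This changes the determinant identity (\ref{det}), the resulting relation between $\cos p\alpha$ and $\Lambda_0$, and hence the degree and singularity count of the tangential curve, so the argument cannot proceed as written even in outline.
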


\end{document}